\documentclass[12pt,a4paper]{amsart}
 \pdfoutput=1
 
 \usepackage{amsmath,amssymb,amsthm}
 \usepackage{bm}  
 \usepackage{mathtools}
\usepackage{tikz}
\usetikzlibrary{arrows}
 \usepackage{thmtools}
 \usepackage{float} 
 
 \usepackage{xcolor} 	
 \usepackage{hyperref}
 \hypersetup{
 	colorlinks,
     linkcolor={red!60!black},
     citecolor={green!60!black},
     urlcolor={blue!60!black},
 }
 \usepackage[maxbibnames=99]{biblatex}
 \usepackage[utf8]{inputenc}
  \usepackage{csquotes}
 \usepackage[T1]{fontenc}
 \usepackage{lmodern}
 \usepackage[babel]{microtype}
 \usepackage[english]{babel}
 
 \usepackage{geometry}
 \usepackage{enumitem}

\theoremstyle{plain}
\newtheorem{theorem}{Theorem}[section]

\newtheorem{corollary}[theorem]{Corollary}
\newtheorem{lemma}[theorem]{Lemma}

\newtheorem{observation}[theorem]{Observation}

\newtheorem*{theorem*}{Theorem}
\newtheorem*{corollary*}{Corollary}

\theoremstyle{definition}

\title{Well-order a flame}

\author{Zsuzsanna Jank\'{o}}
\thanks{The first author was supported by the Hungarian Scientific Research Fund (NKFIH OTKA grant K143858), and the Hungarian Academy of 
Sciences under its Momentum Programme grant number LP2021-2. }
\address{Zsuzsanna Jank\'{o}, Corvinus University of Budapest, and ELTE Centre for Economic and Regional Studies}
\email{zsuzsanna.janko@uni-corvinus.hu}

\author{Attila Jo\'{o}}
\address{Attila Jo\'{o}, Department of Mathematics, Technion, Haifa, Israel 32000}
\address{Attila Jo\'{o},
Department of Mathematics, University of Hamburg, Bundesstra{\ss}e 55 (Geomatikum), 20146 Hamburg, Germany}
\email{a.joo@technion.ac.il}

\keywords{flame, well-order}
\subjclass[2020]{Primary: 05C63  Secondary: 05C20} 
\begin{document}

\begin{abstract}
An $r$-rooted (possibly infinite) digraph $ D=(V,E) $ is a flame if for every $ v\in V\setminus \{ r \} $ there exists a set of edge-disjoint paths from 
$r$ to $v$ in $D$ that covers all ingoing edges of $ v $. Flames were first studied by Lovász in his investigation of edge-minimal subgraphs of a 
rooted digraph that preserve all the local edge-connectivities from the root. He showed that these subgraphs are always flames.   Szeszlér later 
proved a common generalisation of Lovász' result and 
Edmonds’ disjoint arborescence theorem.  In this paper we focus on infinite flames and prove the following constructive characterisation. Every 
(possibly infinite) flame can be constructed transfinitely, starting from the empty edge set and adding a single edge at each step in such a way that 
every intermediate digraph is again a flame.  
\end{abstract}
\maketitle

\section{Introduction}
An $r$-rooted digraph is a (possibly infinite) digraph with a prescribed root vertex $r$ that has no ingoing edges. A flame is an $r$-rooted digraph 
$D=(V,E)$ such that for every $v\in V\setminus \{ r \}$ there exists a set of edge-disjoint paths from $r$ to $v$ that covers all ingoing edges of 
$v$. 
Recall that the local edge-connectivity $\lambda_D(u,v)$ is the maximal number of edge-disjoint paths from $u$ to $v$.
Lovász and 
Calvillo-Vives  independently investigated the edge-minimal subgraphs of finite rooted digraphs preserving all the local edge-connectivities from 
the 
root.   Let $D=(V,E)$ be a finite $r$-rooted digraph and suppose that  
$F\subseteq E$ is 
minimal with respect to the property that for the spanning 
subdigraph $D(F):=(V,F)$ we have $\lambda_{D(F)}(r,v)=\lambda_D(r,v)$ for each $v\in V\setminus \{ r \}$. Clearly, $F$ must contain at least 
$\lambda_D(r,v)$ ingoing edges of $v$ for $v\in V\setminus \{ r \}$. Lovász and 
Calvillo-Vives showed (see \cite[Theorem 2]{lovasz1973connectivity} and \cite{vives1987flames}) independently that $F$ contains 
\emph{exactly} $\lambda_D(r,v)$ ingoing edges of $v$ for each $v\in V\setminus \{ r \}$. In other 
words, every  spanning subdigraph that preserves all the local edge-connectivities from the root and is edge-minimal with respect to this property is 
a 
flame. Recall that an 
 arborescence is a directed rooted tree in which every vertex is reachable  from the root by a directed path. The union of vertex-disjoint 
 arborescences is called a branching. Szeszlér later proved 
the following strengthening of the theorem of Lovász and Calvillo-Vives:
\begin{theorem}[{\cite[Theorem 8]{szeszler2025some}}]\label{thm: szesz}
 Let $D=(V,E)$ be a finite $r$-rooted digraph and $m:=\max_{v\in V\setminus \{ r \}}\lambda_D(r,v)$. Then there are 
 $F_1 \subseteq F_2 \subseteq \cdots \subseteq F_m\subseteq E$ such that for every $i\in [m]$:  $D_i:=(V,F_i)$ is a flame with 
 $\lambda_{D_i}(r,v)=\min \{ 
 \lambda_{D}(r,v), i \}$ for $v\in V\setminus \{ r \}$
 and $F_{i}\setminus F_{i-1}$ forms a branching (where $F_0:=\emptyset$).
\end{theorem}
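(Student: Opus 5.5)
We build the chain $F_1\subseteq\cdots\subseteq F_m$ by induction on $i$. The inductive claim is slightly stronger than what is stated: after step $i$, $D_i=(V,F_i)$ is a flame, every $v\neq r$ has in-degree exactly $k_i(v):=\min\{\lambda_D(r,v),i\}$ in $D_i$, and $\lambda_{D_i}(r,v)=k_i(v)$. In a flame in-degree equals $\lambda_{D_i}(r,v)$, so the flame and connectivity conditions say the same thing once the degrees are right. For the step from $i-1$ to $i$, let
\[
T_i:=\{v\in V\setminus\{r\}: \lambda_D(r,v)\ge i\}.
\]
We must add exactly one new ingoing edge at each $v\in T_i$ and no edge elsewhere. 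Any such edge set $B$ has in-degree at most one everywhere. If the edges are chosen so that every $v\in T_i$ is reached from $r$ inside $B$, then $B$ is automatically a branching, because a directed cycle would contain a vertex not reachable from $r$ within $B$. So the whole problem is to choose the new edges correctly.

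The key step is a submodular/Menger argument showing that a suitable $B$ exists. We need $\lambda_{D_{i-1}+B}(r,v)\ge i$ for all $v\in T_i$; equivalently, every set $X\not\ni r$ with $X\cap T_i\neq\emptyset$ must have $\varrho_{F_{i-1}\cup B}(X)\ge i$. A set $X$ is \emph{deficient} if $\varrho_{F_{i-1}}(X)=i-1$ and $X$ meets $T_i$. Since $D_i$ has $\lambda\ge i-1$ on $T_i$, every relevant $X$ already has in-degree at least $i-1$, so exactly the deficient sets must receive an edge of $B$ entering them.

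The task therefore becomes finding a branching in $E\setminus F_{i-1}$ with the following properties: it has root $r$; its vertex set covers $T_i$; every vertex of $T_i$ has in-degree $1$ and every vertex outside $T_i$ has in-degree $0$; and it enters every deficient set. This is an Edmonds/Frank-type covering problem, and the plan follows the proof of Edmonds' branching theorem. Deficient sets are closed under union and intersection when they cross and both intersections meet $T_i$, by submodularity of $\varrho$. Moreover, each deficient $X$ has an edge of $E\setminus F_{i-1}$ entering it, because $\lambda_D(r,v)\ge i>\varrho_{F_{i-1}}(X)$ for $v\in X\cap T_i$. We then grow the arborescence greedily. Start from $\{r\}$, and at each stage take a minimal deficient set or a vertex still needing an edge. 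Add an edge of $E\setminus F_{i-1}$ whose head $v$ lies in $T_i$ inside a minimal deficient set and whose tail is already reached, then update $F_{i-1}$ to $F_{i-1}+e$ and recompute the deficient sets.

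The main obstacle is the constraint that heads must avoid vertices outside $T_i$ and that each vertex of $T_i$ receives exactly one edge. A new edge might be forced to enter a deficient set at a vertex outside $T_i$, or two deficient sets might need distinct edges at the same vertex. To handle this, we would first show that every minimal deficient set $X$ has an entering edge of $D$ whose head lies in $X\cap T_i$. This uses $\lambda_D(r,v)\ge i$ applied to $X\cap T_i$ together with the observation that vertices in $X\setminus T_i$ have $\lambda_D<i$, which should let one uncross them out. If necessary, we would instead apply Szeszlér's idea directly: take an edge-minimal $F\supseteq F_{i-1}$ with $\lambda_{(V,F)}(r,v)\ge k_i(v)$, and use the Lovász–Calvillo-Vives degree argument, applied relative to the fixed $F_{i-1}$, to show that each $v$ gets exactly $k_i(v)$ ingoing edges. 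Together with the reachability observation above, this yields the branching.
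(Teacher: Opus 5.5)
There is a genuine gap, and it lies in the inductive step itself, not only in the unfinished uncrossing. (The paper quotes this theorem from Szeszl\'er without proof, so I am judging your argument on its own.)

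Your invariant records only that $D_{i-1}$ is a flame with in-degrees and connectivities $k_{i-1}(v)$. That is not enough for the step to exist. Take $V=\{r,a,b\}$ and $E=\{ra,rb,ab,ba\}$, so $\lambda_D(r,a)=\lambda_D(r,b)=2=m$. The set $F_1=\{ra,rb\}$ satisfies your invariant, and your greedy rule can produce it: after adding $ra$, the only minimal deficient set is $\{b\}$, and $rb$ has a reached tail. But any admissible $F_2\supseteq F_1$ needs in-degree $2$ at both $a$ and $b$. Hence $F_2=E$, and $F_2\setminus F_1=\{ab,ba\}$ is a directed cycle.

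So a suitable $B$ need not exist for an arbitrary $F_{i-1}$. This is exactly why Edmonds' theorem (the special case noted after the statement) cannot be proved by choosing the first arborescence greedily. Bottom-up, you would need a forward-looking invariant that guarantees extendability.

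A natural repair is to go top-down. Let $F_m$ be edge-minimal with $\lambda_{(V,F_m)}(r,v)=\lambda_D(r,v)$ for all $v$; it is a flame by Lov\'asz--Calvillo-Vives. Then prove a peeling lemma: every flame $F$ with maximum in-degree $i$ contains a branching $B$ with the following properties.
\begin{enumerate}
\item $B$ has exactly one edge into each vertex $v$ with $|\delta_F(v)|=i$ and no edge into any other vertex.
\item $\lambda_{(V,F\setminus B)}(r,v)\ge\min\{|\delta_F(v)|,i-1\}$ for every $v$.
\end{enumerate}
Then $F\setminus B$ again satisfies the lemma's hypothesis, so top-down there are no dead ends. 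The lemma itself can be attacked in the spirit of Lov\'asz's proof of Edmonds' theorem.

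Two further steps fail as written.

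First, $F_i\setminus F_{i-1}$ is in general not an arborescence rooted at $r$. For $E=\{ra,rc,ab,cb\}$ we get $T_2=\{b\}$. For either possible $F_1$, the new edge into $b$ has tail $a$ or $c$, and neither of these receives a new edge. So requiring every vertex of $T_i$ to be reached from $r$ inside $B$ gets stuck, and so does growing from $\{r\}$ using only reached tails. The components of $B$ are rooted outside $T_i$, and acyclicity must be obtained by other means.

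Second, your fallback via an edge-minimal $F\supseteq F_{i-1}$ does not address acyclicity. In the first example it returns $F=E$, whose in-degrees are exactly right, yet $F\setminus F_1$ is a cycle. Moreover, you only assert two claims without proof: the relative version of the Lov\'asz--Calvillo-Vives degree argument, and that every minimal deficient set $X$ is entered by an edge with head in $X\cap T_i$.
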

\noindent Note that the flame $D_m$ preserves all the local edge-connectivities from the root. Furthermore, if 
$k:=\min_{v\in 
 V\setminus \{ r 
\}}\lambda_D(r,v)$, then for every $i\in [k]$,  $F_{i}\setminus F_{i-1}$ must form a spanning arborescence. The latter is the classical 
`disjoint arborescence' theorem of Edmonds (see \cite{edmonds1973edge}). Theorem \ref{thm: szesz}, when applied to flames, allows us to 
construct flames as increasing chains of certain subflames. In this paper we prove that for every (possibly infinite) flame it is possible to find an 
increasing 
chain of 
subflames growing by a single edge in each step. More precisely, we prove the following:

\begin{theorem}\label{thm: main}
For every flame $D=(V,E)$, there exists a well-order 
$<$ on $ E $ of order type $ 
\left|E\right| $ such that for every subset $F\subseteq E$ that corresponds to an initial segment of $<$, the spanning subdigraph $  (V,F)  $ is 
a flame.
\end{theorem}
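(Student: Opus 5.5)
The plan is to reduce Theorem~\ref{thm: main} to a one-edge augmentation lemma for subflames and then assemble the well-order by transfinite induction on $|E|$. The relative statement I would actually prove is stronger, so that the induction goes through:

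\emph{($\ast$) If $D=(V,E)$ is a flame and $F\subseteq E$ is such that $(V,F)$ is a flame, then $E\setminus F$ admits a well-order of order type $|E\setminus F|$ such that $(V,F\cup I)$ is a flame for every initial segment $I$.}

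Theorem~\ref{thm: main} is the case $F=\emptyset$.

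\textbf{Step 1 (augmentation lemma).} If $(V,F)$ is a flame, $F\subsetneq E$ and $e\in E\setminus F$, then there is a finite sequence $e_1,\dots,e_n=e$ of edges of $E\setminus F$ such that every $(V,F\cup\{e_1,\dots,e_k\})$ is a flame. I expect this to be the main obstacle.

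The danger is that adding an edge $xy$ creates a new ingoing edge at $y$. Then $y$ needs one more edge-disjoint $r$--$y$ path in the enlarged digraph, and the paths of the other vertices may be destroyed. My first attempt would be as follows:
\begin{itemize}
\item Since $D$ is a flame, there is a path system in $D$ covering all ingoing edges of the head $v$ of $e$. Let $P$ be the path of this system ending with $e$.
\item Compare it with a maximal system of edge-disjoint paths to $v$ in $(V,F)$ via Menger/augmenting paths.
\item Pick the first edge $xy$ of an augmenting path that leaves $F$. Argue that $F+xy$ is a flame: the edge $xy$ gains a covering path, and the system at every other vertex can be rerouted along the augmenting path.
\item Repeat this until $e$ itself is added. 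Termination should follow because $\lambda(r,v)$ strictly increases.
\end{itemize}
A useful sanity check is the easy special case in which $y$ has no outgoing edges. There, no path to any other vertex passes through $y$, so only $y$'s own system needs to be changed.

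\textbf{Step 2 (chains).} I would show that the union of an increasing chain of subflames is a subflame, possibly after strengthening the invariant. For finite in-degree this follows by compactness, since each covering system lives in a finite stage. For infinite in-degree I would keep, along the chain, coherent path systems that only grow. If plain flames do not behave well under unions, I would add this coherence to the induction hypothesis.

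\textbf{Step 3 (countable case).} Let $|E\setminus F|=\aleph_0$ and enumerate $E\setminus F$ as $f_0,f_1,\dots$. At stage $n$, apply Step 1 to the current subflame and the least-indexed edge $f_n$ not yet added, appending the finitely many new edges. Every edge is added after finitely many steps, so the order type is $\omega$. The finite case of ($\ast$) also follows from Step 1 directly.

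\textbf{Step 4 (uncountable case).} Let $\kappa=|E\setminus F|>\aleph_0$. Take a continuous increasing chain $(M_\alpha)_{\alpha<\mathrm{cf}\,\kappa}$ of elementary submodels, each of size $<\kappa$, containing $D$, $F$ and fixed witnessing path systems. Put $F_\alpha:=F\cup(E\cap M_\alpha)$. By elementarity, the witnessing systems of $D$ and of $(V,F)$ restricted to $M_\alpha$ show that each $(V,F_\alpha)$ is a flame, and limits are handled by Step 2.

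Now apply the induction hypothesis ($\ast$) at cardinality $<\kappa$ to the pair $F_\alpha\subseteq F_{\alpha+1}$, where $(V,F_{\alpha+1})$ plays the role of $D$. Concatenating these well-orders yields order type $\kappa$, because the pieces have size $<\kappa$. The only delicate point is checking that "flame with a covering system" reflects to $M_\alpha$ when in-degrees are uncountable. This is handled by choosing the witnessing systems inside the models.
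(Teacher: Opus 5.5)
There is a genuine gap. Step 1 is false for infinite flames, and so is your strengthened hypothesis ($\ast$). Write $xy$ for an edge from $x$ to $y$. Let $V=\{r,u,v,y_1,y_2,\dots\}$. Let $F$ consist of $a_i=ry_i$, $c_i=y_iv$ $(i\geq 1)$ and $h=y_1u$, and let $E\setminus F$ consist of $e=uv$ and $d_i=y_{i+1}y_i$ $(i\geq 1)$.

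Both $F$ and $D$ are flames. In $D$, route $e$ along $a_1he$ and $c_i$ along $a_{i+1}d_ic_i$, and give $y_i$ the paths $a_i$ and $a_{i+1}d_i$. Now let $S$ be any finite set of edges $d_i$, and let $N$ exceed all their indices. In $F\cup S\cup\{e\}$ the set $\{y_1,\dots,y_N\}$ is entered only by $a_1,\dots,a_N$, yet each of the $N+1$ paths ending in $c_1,\dots,c_N,e$ must enter it. So no finite sequence leads from $F$ to $e$. In every well-order of $E\setminus F$ of type $\omega$, the edge $e$ has only finitely many predecessors, so ($\ast$) fails for $(D,F)$. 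A well-order of type $\omega+1$ listing all $d_i$ first does work.

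Your termination argument breaks exactly here. We have $\lambda_F(r,v)=\lambda_D(r,v)=\aleph_0$, and $T_{v,F}=V\setminus\{r\}$ contains $u$ while $\delta_D(T_{v,F})\subseteq F$. So no edge of $E\setminus F$ enters the tight set, a situation a cardinality count excludes only in the finite case.

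Your worry that adding $xy$ may destroy other vertices' path systems is misplaced. Only $\delta(y)$ changes, and by Lemma~\ref{lem: char no single}, $F\cup\{xy\}$ is a flame iff $x\notin T_{y,F}$.

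Step 2 also genuinely needs an extra invariant. Take $a_j=ry_j$, $d_j=y_jy_{j+1}$, $c_j=y_jv$, $h_j=y_ju$ $(j\geq 1)$ and $e=uv$. Deleting $c_{n+1},c_{n+2},\dots$ gives, for $n=1,2,\dots$, an increasing chain of flames whose union is not a flame.

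The missing idea is to fix once and for all a witnessing system $\mathcal{P}=\{P_g : g\in E\}$ for $D$, and to augment only subflames that are almost $\mathcal{P}$-respecting. Then $A=\{e\}\cup\{f\in\delta_F(v) : P_f\not\subseteq F\}$ is finite. Suppose $u\in T_{v,F}$. Corollary~\ref{cor: at most n}, applied to $\{P_f : f\in\delta_F(v)\setminus A\}$, shows that some edge of $\bigcup_{f\in A}P_f\setminus F$ enters $T_{v,F}$. That edge can be added by Corollary~\ref{cor: insert one}. Induction on $\lvert\bigcup_{f\in A}P_f\setminus F\rvert$ then gives Lemma~\ref{lem: add fin}. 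The $F$ in the first example is not almost $\mathcal{P}$-respecting for any choice of $\mathcal{P}$.

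Unions of chains of $\mathcal{P}$-respecting sets are again $\mathcal{P}$-respecting, which is exactly the invariant Step 2 needs. The bookkeeping of Lemma~\ref{lem: add countable} closes finite augmentations off into countable $\mathcal{P}$-respecting increments, so a plain transfinite recursion finishes the proof.

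Your elementary-submodel route could replace that bookkeeping when $F=\emptyset$: if $\mathcal{P}\in M_\alpha$, then $E\cap M_\alpha$ is closed under $g\mapsto P_g$. This works only after you restate ($\ast$) for $\mathcal{P}$-respecting $F$ and Step 1 for almost $\mathcal{P}$-respecting $F$. For arbitrary $F$, even your claim that $F\cup(E\cap M_\alpha)$ is a flame is unsupported. It glues a path system of $D$ to one of $F$, and nothing makes the two edge-disjoint.
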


The special case of Theorem \ref{thm: main} where $E$ is finite follows from our previous result that the edge sets of the 
flame subgraphs of a finite rooted digraph form a greedoid (see \cite[Theorem 1.2]{jooGreedoidFlame2021}). The general case requires more 
sophisticated tools, mainly because an infinite flame may admit a proper subflame with exactly the same edge-connectivities from the root, which 
renders our key lemma \cite[Lemma 3.1]{jooGreedoidFlame2021} ineffective in the infinite setting.

The paper is organized as follows. In the following section we introduce our terminology. Section \ref{sec: prep} is devoted to the proof of the 
lemmas 
 we need. Finally, in Section \ref{sec: main} we prove Theorem \ref{thm: main}.
\section{Notation and terminology}

The digraphs $D=(V,E)$ considered in this paper may have parallel edges but no loops.
For $e\in E$, we denote by $D-e$ the digraph $(V,E\setminus\{e\})$.
If an edge $e$ is directed from $u$ to $v$, then $u$ is called the \emph{tail} and $v$ the \emph{head} of $e$.
For $X\subseteq V$, we denote by $\delta_D(X)$ the set of all edges $e\in E$ whose head lies in $X$ and whose tail lies in $V\setminus X$.

A \emph{rooted digraph} consists of a digraph $D=(V,E)$ together with a specified vertex $r\in V$, called the root, satisfying 
$\delta_D(r)=\emptyset$.
If a rooted digraph $D=(V,E)$ is fixed, then, to simplify notation, we identify any set $F\subseteq E$ with the spanning subdigraph $(V,F)$, which 
is considered to have the same root $r$.

If $U\subseteq V$ and $u\in U$, then the digraph obtained by \emph{contracting} the vertices of $U$ to $u$ is defined as follows.
Its vertex set is $(V\setminus U)\cup\{u\}$.
Edges whose head and tail both lie in $U$ are deleted.
Edges with tail in $U$ and head in $V\setminus U$ have $u$ as their new tail and keep their original head.
Edges with head in $U$ and tail in $V\setminus U$ have $u$ as their new head and keep their original tail.
Edges with neither endpoint in $U$ keep their original head and tail.
If $D$ is rooted, then the resulting digraph is rooted at the same vertex.
Moreover, if vertices are contracted to the root $r$, then any resulting ingoing edges of $r$ are deleted.

By a \emph{path} $P$ we always mean a finite directed path, and we identify paths with their edge sets.
An $XY$-path is a path that starts in $X$, terminates in $Y$, and has no internal vertices in $X\cup Y$.
The initial and terminal segments of a path $P$ are the subpaths of $P$ that share, respectively, the first and the last edge of $P$.
For singletons we omit brackets; for example, we write $\delta_D(v)$ instead of $\delta_D(\{v\})$ and $rv$-path instead of $\{r\}\{v\}$-path.

For a rooted digraph $D$, we define $\mathcal{G}_D(v)$ to be the collection of all sets $I\subseteq\delta_D(v)$ for which there exists a set 
$\mathcal{P}_I$ of edge-disjoint $rv$-paths in $D$ that covers $I$.
A rooted digraph $D=(V,E)$ is a \emph{flame} if $\delta_D(v)\in\mathcal{G}_D(v)$ for every $v\in V\setminus\{r\}$.
A \emph{transversal} of a set $\mathcal{P}$ of edge-disjoint paths is a set of edges containing exactly one edge from each path in $\mathcal{P}$.

For a nonempty family $\mathcal{F}$ of sets, we write $\bigcup\mathcal{F}$ and $\bigcap\mathcal{F}$ for the union and the intersection of the 
sets in $\mathcal{F}$, respectively. Let $[k]$ denote $\{ 1,2,\dots, k \}$ for $k\in \mathbb{N}$.
The variables $\alpha$, $\beta$, and $\gamma$ denote ordinal numbers.
The order type of a well-order is the unique ordinal $\alpha$ such that $(\alpha,\in)$ is isomorphic to it.
The cardinality $\lvert X\rvert$ of a set $X$ is the smallest ordinal $\alpha$ such that $X$ admits a well-order of order type $\alpha$.

\section{Preparations}\label{sec: prep}

Let an $r$-rooted digraph $D=(V,E)$ be fixed for this section.
A set $X\subseteq V\setminus\{r\}$ is called \emph{fillable} if there exists a set
\[
\mathcal{P}=\{P_e : e\in\delta_D(X)\}
\]
of edge-disjoint $rX$-paths in $D$ such that the last edge of $P_e$ is $e$ for every $e\in\delta_D(X)$.
Note that $D$ is a flame if and only if, for each $v\in V\setminus\{r\}$, the singleton $\{v\}$ is fillable.

\begin{lemma}\label{lem: intersection of fillable}
If $\mathcal{F}$ is a nonempty family of fillable sets, then $\bigcap\mathcal{F}$ is fillable.
\end{lemma}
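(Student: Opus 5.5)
The plan is to prove the statement first for two sets, using a rerouting argument in which the old paths are never changed, and then pass to arbitrary families by transfinite recursion. The point of never changing the old paths is that limit steps then come for free.

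\emph{Two sets.} Let $X$ and $Y$ be fillable, witnessed by $\mathcal{P}=\{P_e : e\in\delta_D(X)\}$ and $\mathcal{Q}=\{Q_f : f\in\delta_D(Y)\}$. Each $f\in\delta_D(X\cap Y)$ falls into one of two cases.
\begin{itemize}
\item If the tail of $f$ lies outside $X$, then $f\in\delta_D(X)$, and we keep $P_f$. Its internal vertices avoid $X\supseteq X\cap Y$, so it is an $r(X\cap Y)$-path.
\item Otherwise the tail of $f$ lies in $X\setminus Y$, so $f\in\delta_D(Y)$. Since $r\notin X$, the path $Q_f$ has a last edge $g=g(f)$ entering $X$, so $g\in\delta_D(X)$. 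The segment of $Q_f$ after $g$ stays inside $X$, and its internal vertices avoid $Y$. The head of $g$ is an internal vertex of $Q_f$, because the tail of $f$ is in $X$ and so $g\neq f$. Hence the head of $g$ lies in $X\setminus Y$. We set $R_f:=P_g\cup Q_f[g,f]$, where $Q_f[g,f]$ is the subpath of $Q_f$ from $g$ to $f$.
\end{itemize}

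Next I check that these paths are edge-disjoint.
\begin{itemize}
\item The map $f\mapsto g(f)$ is injective, since the $Q_f$ are edge-disjoint.
\item Each $g(f)$ has its head in $X\setminus Y$. So $g(f)$ differs from every kept edge, because kept edges have their head in $X\cap Y$.
\item The edges of $Q_f[g,f]$ other than $g$ have both endpoints in $X$. Edges of $P$-paths have their head outside $X$, except the last edge, which has its tail outside $X$. So these segments meet no $P$-path.
\item Different segments $Q_f[g,f]$ are disjoint because $\mathcal{Q}$ is edge-disjoint.
\end{itemize}
Each $R_f$ is a genuine path: $P_g$ lives outside $X$ except at its endpoint, and the $Q$-segment lives in $X$. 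It is an $r(X\cap Y)$-path ending in $f$, since its internal vertices lie outside $X$ or in $X\setminus Y$.

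\emph{Arbitrary families.} Enumerate $\mathcal{F}=\{Y_\alpha:\alpha<\kappa\}$ and put $X_\alpha:=\bigcap_{\beta<\alpha}Y_\beta$, with $X_0:=V\setminus\{r\}$, which is trivially fillable. I will recursively build systems $\mathcal{P}^\alpha$ witnessing that $X_\alpha$ is fillable, with the invariant that for $\beta<\alpha$ and $e\in\delta_D(X_\beta)\cap\delta_D(X_\alpha)$ the path $P^\alpha_e$ equals $P^\beta_e$.

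At a successor step, I apply the two-set construction with $\mathcal{P}^\alpha$ in the role of $\mathcal{P}$ and a witness for $Y_\alpha$ in the role of $\mathcal{Q}$. Edges of $\delta_D(X_{\alpha+1})$ that already lie in $\delta_D(X_\alpha)$ keep their paths, so the invariant is preserved.

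At a limit $\lambda$, take $e\in\delta_D(X_\lambda)$. Its tail lies outside $X_\lambda$, hence outside some $X_\alpha$ with $\alpha<\lambda$, and for all larger $\alpha$ as well, since the $X_\alpha$ decrease. So $e\in\delta_D(X_\alpha)$ for all large $\alpha<\lambda$, and the path $P^\alpha_e$ is eventually constant; let $P^\lambda_e$ be this final value. It is an $rX_\lambda$-path, because its internal vertices avoid $X_\alpha\supseteq X_\lambda$. Any two such paths are both fixed by some common stage $\alpha<\lambda$, where they are edge-disjoint, so $\mathcal{P}^\lambda$ is edge-disjoint.

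The same limit argument at the final stage $\kappa$ gives fillability of $\bigcap\mathcal{F}$. The step I expect to need the most care is the successor rerouting, specifically that $g(f)$ never coincides with a kept edge and that the $Q$-segments avoid all $P$-paths. The design choice that makes the limits work is to always keep the old system untouched and reroute only the new paths.
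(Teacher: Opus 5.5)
Your proof is correct and is essentially the paper's argument. Both use the same transfinite recursion over the partial intersections: at a successor step an edge already in the previous boundary keeps its path, while a newly created boundary edge $f$ gets the old path of the last edge by which the new set's witness path for $f$ enters the current intersection, followed by the rest of that witness path; at limits the eventually constant path is taken. Isolating the successor step as a two-set lemma and checking edge-disjointness explicitly only spells out details the paper leaves implicit, and your invariant, though weaker than the paper's, is exactly what the limit step needs.
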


\begin{proof}
Let $\mathcal{F}=\{X_\alpha : \alpha<\kappa\}$ and, for each $\alpha<\kappa$, let
$\{P_e^\alpha : e\in\delta_D(X_\alpha)\}$ be a path system witnessing that $X_\alpha$ is fillable, where $e$ is the last edge of $P_e^\alpha$.
Define $Y_\alpha:=\bigcap_{\beta<\alpha}X_\beta$ for $1\le \alpha\le \kappa$.

We construct by transfinite recursion a path system
$\{Q_e^\alpha : e\in\delta_D(Y_\alpha)\}$ witnessing that $Y_\alpha$ is fillable.
We maintain the following property: for every $\beta<\alpha$ and every $e\in\delta_D(Y_\alpha)$, the path $Q_e^\alpha$ contains exactly one edge
$f\in\delta_D(Y_\beta)$, and the initial segment of $Q_e^\alpha$ up to $f$ coincides with $Q_f^\beta$.

We set $Q_e^1:=P_e^0$ for each $e\in\delta_D(Y_1)=\delta_D(X_0)$.
Suppose that $\alpha\le\kappa$ and that $Q_e^\beta$ has already been defined for all $\beta<\alpha$ and all $e\in\delta_D(Y_\beta)$.
First assume that $\alpha=\beta+1$ is a successor ordinal.
For $e\in\delta_D(Y_{\beta+1})\cap\delta_D(Y_\beta)$, we define $Q_e^{\beta+1}:=Q_e^\beta$.
Now suppose that $e\in\delta_D(Y_{\beta+1})\setminus\delta_D(Y_\beta)$.
Then $e\in\delta_D(X_\beta)$ and the tail of $e$ lies in $Y_\beta$ (see Figure~\ref{fig: fillable}).
Let $f$ be the last edge of $P_e^\beta$ that lies in $\delta_D(Y_\beta)$.
We define $Q_e^{\beta+1}$ as the union of $Q_f^\beta$ and the terminal segment of $P_e^\beta$ starting with $f$. 
 
 \begin{figure}[h]
 \centering
 
\begin{tikzpicture}

\draw  (-3.5,3.5) rectangle (1,-1);
\draw  (-3.5,0.5) rectangle (0,-1);
\node[circle,inner sep=0pt,draw,minimum size=5] (v7) at (-5.5,4.5) {};
\node[circle,inner sep=0pt,draw,minimum size=5] (v1) at (-6.5,3.5) {};
\node[circle,inner sep=0pt,draw,minimum size=5] (v2) at (-6.5,2) {};
\node[circle,inner sep=0pt,draw,minimum size=5] (v3) at (-6,1) {};
\node[circle,inner sep=0pt,draw,minimum size=5] (v4) at (-5,0) {};
\node[circle,inner sep=0pt,draw,minimum size=5] (v5) at (-4,-0.5) {};
\node[circle,inner sep=0pt,draw,minimum size=5] (v6) at (-2.5,-0.5) {};

\node[circle,inner sep=0pt,draw,minimum size=5] (v8) at (-4,5.5) {};
\node[circle,inner sep=0pt,draw,minimum size=5] (v9) at (-3,5.5) {};
\node[circle,inner sep=0pt,draw,minimum size=5] (v10) at (-2,5) {};
\node[circle,inner sep=0pt,draw,minimum size=5] (v11) at (-1.5,4) {};
\node[circle,inner sep=0pt,draw,minimum size=5] (v12) at (-1.5,3) {};
\node[circle,inner sep=0pt,draw,minimum size=5] (v13) at (-5,2.5) {};
\node[circle,inner sep=0pt,draw,minimum size=5] (v14) at (-4,2.5) {};
\node[circle,inner sep=0pt,draw,minimum size=5] (v15) at (-2.5,2.5) {};

\draw[-triangle 60]   (v1) edge (v2);
\draw[-triangle 60]   (v2) edge (v3);
\draw[-triangle 60]   (v3) edge (v4);
\draw[-triangle 60]   (v4) edge (v5);
\draw[-triangle 60]   (v5) edge (v6);
\draw[-triangle 60]   (v1) edge (v7);
\draw[-triangle 60]   (v7) edge (v8);
\draw[-triangle 60]   (v8) edge (v9);
\draw[-triangle 60]   (v9) edge (v10);
\draw[-triangle 60]   (v10) edge (v11);
\draw[-triangle 60]   (v11) edge (v12);
\draw[-triangle 60]   (v1) edge (v13);
\draw[-triangle 60]   (v13) edge (v14);
\draw[-triangle 60]   (v14) edge (v15);

\draw[-triangle 60, dashed]  (v11) edge (v12);
\draw[-triangle 60, dashed]  (v14) edge (v15);

\node[circle,inner sep=0pt,draw,minimum size=5] (v16) at (-2,1.5) {};
\node[circle,inner sep=0pt,draw,minimum size=5] (v17) at (-1,1) {};
\node[circle,inner sep=0pt,draw,minimum size=5] (v18) at (-1,0) {};
\draw[-triangle 60, dashed]  (v15) edge (v16);
\draw[-triangle 60, dashed]  (v16) edge (v17);
\draw[-triangle 60, dashed]  (v17) edge (v18);

\node at (-6.8,3.4) {$r$};
\node at (-3,0.8) {$Y_{\beta+1}$};
\node at (-3,3.8) {$Y_{\beta}$};
\node at (-3.2,2.7) {$f$};
\node at (-4.6,2.8) {$Q_f^{\beta}$};
\node at (-1.2,0.7) {$e$};
\node at (-2.6,1.8) {$P_e^{\beta}$};
\end{tikzpicture}
 \caption{The construction of the paths $Q_e^{\beta+1}$ for $e\in \delta_D(Y_{\beta+1})$.} \label{fig: fillable}
 \end{figure}
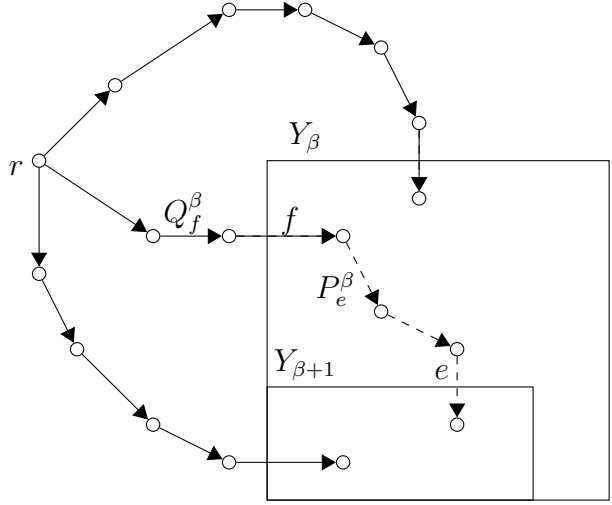
 
 Suppose now that $\alpha$ is a limit ordinal.
 For each $e\in\delta_D(Y_\alpha)$, there exists a smallest $\beta<\alpha$ such that $e\in\delta_D(Y_\beta)$.
 We define $Q_e^\alpha:=Q_e^\beta$.
 This completes the transfinite construction.
 The path system $\{Q_e^\kappa : e\in\delta_D(Y_\kappa)\}$ witnesses that $\bigcap\mathcal{F}$ is fillable.
 \end{proof}
 
 \begin{corollary}\label{cor: fillable closure}
 For every $X\subseteq V\setminus\{r\}$, there exists a $\subseteq$-smallest fillable set $\mathsf{fill}_D(X)$ containing $X$.
 \end{corollary}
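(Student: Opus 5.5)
The plan is to define $\mathsf{fill}_D(X)$ as the intersection of all fillable sets containing $X$ and to invoke Lemma~\ref{lem: intersection of fillable}. Let
\[
\mathcal{F}_X:=\{ Y\subseteq V\setminus\{r\} : X\subseteq Y \text{ and } Y \text{ is fillable}\}.
\]
We need $\mathcal{F}_X$ to be nonempty, since Lemma~\ref{lem: intersection of fillable} applies only to nonempty families.

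The natural candidate is $V\setminus\{r\}$ itself. The edges in $\delta_D(V\setminus\{r\})$ are exactly the edges whose tail is $r$, because $r$ has no ingoing edges. For each such edge $e$, the one-edge path $P_e:=\{e\}$ is an $r(V\setminus\{r\})$-path whose last edge is $e$. Distinct edges give edge-disjoint paths, and no internal vertices occur, so $V\setminus\{r\}$ is fillable and contains $X$. Hence $\mathcal{F}_X\neq\emptyset$.

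Now set $\mathsf{fill}_D(X):=\bigcap\mathcal{F}_X$. By Lemma~\ref{lem: intersection of fillable} this set is fillable. It contains $X$ because every member of $\mathcal{F}_X$ does. It is contained in every fillable superset of $X$ by construction, so it is the $\subseteq$-smallest such set, and it is unique as the least element of $\mathcal{F}_X$ under inclusion.

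The main obstacle is the nonemptiness check, and it is mild. The one point to verify is that the single-edge paths fit the definition of $rX$-paths: they start at $r$, end in $V\setminus\{r\}$, and have no internal vertices. Everything else follows directly from Lemma~\ref{lem: intersection of fillable}.
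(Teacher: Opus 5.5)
Your proof is correct and follows the paper's argument exactly: take the intersection of all fillable supersets of $X$, note that the family is nonempty because $V\setminus\{r\}$ is fillable, and apply Lemma~\ref{lem: intersection of fillable}. Your check via single-edge paths out of $r$ spells out the fillability of $V\setminus\{r\}$, which the paper leaves implicit.
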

 
 \begin{proof}
 The set $V\setminus\{r\}$ is fillable and includes $X$.
 Therefore, by Lemma~\ref{lem: intersection of fillable}, the intersection of all fillable supersets of $X$ is the desired set $\mathsf{fill}_D(X)$.
 \end{proof}
 
 Suppose that $v\in V\setminus\{r\}$ and that $\delta_D(v)\in\mathcal{G}_D(v)$.
 A set $T\subseteq V\setminus\{r\}$ is called \emph{$v$-tight} in $D$ if, for every path system $\mathcal{P}$ witnessing
 $\delta_D(v)\in\mathcal{G}_D(v)$, the set $\delta_D(T)$ is a transversal of $\mathcal{P}$.
 
 \begin{observation}\label{obs: tight is fillable}
 If a set $T$ is $v$-tight, then $T$ is fillable.
 \end{observation}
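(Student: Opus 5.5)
Fix a single path system $\mathcal{P}$ witnessing $\delta_D(v)\in\mathcal{G}_D(v)$; one exists by the standing assumption in the definition of $v$-tightness. By $v$-tightness, $\delta_D(T)$ is a transversal of $\mathcal{P}$. This is the only instance of tightness we need. I read ``transversal'' as stated: every edge of $\delta_D(T)$ lies on some path of $\mathcal{P}$, and each path of $\mathcal{P}$ contains exactly one edge of $\delta_D(T)$.

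For each $e\in\delta_D(T)$, let $P\in\mathcal{P}$ be a path containing $e$. Define $P_e$ to be the initial segment of $P$ ending with $e$. We then check three things.

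\begin{enumerate}
\item $P_e$ is an $rT$-path. It starts at $r$, and $r\notin T$. It ends at the head of $e$, which lies in $T$. Suppose some earlier vertex of $P_e$ were in $T$, and take the first such vertex. The path starts outside $T$, so the edge of $P$ entering that vertex would lie in $\delta_D(T)$ and precede $e$ on $P$. Then $P$ would contain two edges of $\delta_D(T)$, contradicting the transversal property. Hence no vertex of $P_e$ other than its last one lies in $T$.
\item The choice of $P$ is forced. The paths in $\mathcal{P}$ are edge-disjoint, so $e$ lies on exactly one of them. Hence $P_e$ is well defined.
\item The paths $P_e$ are pairwise edge-disjoint. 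Take distinct $e,e'\in\delta_D(T)$. Each path of $\mathcal{P}$ contains only one edge of $\delta_D(T)$, so $e$ and $e'$ lie on different members of $\mathcal{P}$. Their initial segments are therefore subsets of edge-disjoint paths.
\end{enumerate}

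Thus $\{P_e : e\in\delta_D(T)\}$ is a set of edge-disjoint $rT$-paths in which $e$ is the last edge of $P_e$, and so it witnesses that $T$ is fillable. (Also $T\subseteq V\setminus\{r\}$ by definition.)

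No step is a real obstacle; the only point needing care is that the transversal condition gives both the covering of $\delta_D(T)$ and the uniqueness of the crossing edge on each path.
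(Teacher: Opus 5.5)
Your proof is correct and is the paper's argument: fix one witnessing system $\mathcal{P}$ and truncate each path at its unique edge of $\delta_D(T)$. You spell out the checks the paper leaves implicit, and your reading of ``transversal'' (every edge of $\delta_D(T)$ lies on some path, and each path contains exactly one such edge) matches how the paper uses the term elsewhere, e.g.\ in the proof of Lemma~\ref{lem: tight union}.
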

 
 \begin{proof}
 Let $\mathcal{P}$ be a path system witnessing $\delta_D(v)\in\mathcal{G}_D(v)$.
 Then the initial segments of the paths in $\mathcal{P}$ up to their unique edges in $\delta_D(T)$ witness that $T$ is fillable.
 \end{proof}
 
 \begin{lemma}\label{lem: tight union}
 If $\mathcal{F}$ is a nonempty family of $v$-tight sets, then both $\bigcup\mathcal{F}$ and $\bigcap\mathcal{F}$ are $v$-tight.
 \end{lemma}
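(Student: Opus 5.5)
The plan is to fix an arbitrary path system $\mathcal{P}$ witnessing $\delta_D(v)\in\mathcal{G}_D(v)$ and a single path $P\in\mathcal{P}$. For a set $X$, write $\mathrm{in}_P(X):=|P\cap\delta_D(X)|$ for the number of edges of $P$ entering $X$. We must show $\mathrm{in}_P(\bigcup\mathcal{F})=\mathrm{in}_P(\bigcap\mathcal{F})=1$, knowing $\mathrm{in}_P(T)=1$ for every $T\in\mathcal{F}$. Since $\mathcal{P}$ is arbitrary, this gives $v$-tightness.

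\emph{Reduction to a finite subfamily.} The path $P$ is finite, so it has finitely many vertices. For each vertex $x$ of $P$ in $\bigcup\mathcal{F}$, choose some $T\in\mathcal{F}$ containing $x$. For each vertex $x$ of $P$ outside $\bigcap\mathcal{F}$, choose some $T\in\mathcal{F}$ missing $x$. Add one more arbitrary member of $\mathcal{F}$ so the collection is nonempty. This gives a finite nonempty $\mathcal{F}_0\subseteq\mathcal{F}$ with
\[
V(P)\cap\bigcup\mathcal{F}=V(P)\cap\bigcup\mathcal{F}_0 \quad\text{and}\quad V(P)\cap\bigcap\mathcal{F}=V(P)\cap\bigcap\mathcal{F}_0 .
\]
Whether an edge of $P$ enters a set depends only on which vertices of $P$ lie in it. Hence $\mathrm{in}_P$ agrees on $\bigcup\mathcal{F}$ and $\bigcup\mathcal{F}_0$, and likewise on the intersections. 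It therefore suffices, by induction on $|\mathcal{F}_0|$, to treat two sets $T,T'$ with $\mathrm{in}_P(T)=\mathrm{in}_P(T')=1$. The inductive step then needs the two-set claim for $T\cup T'$ and $T\cap T'$ along this fixed $P$, which is exactly what we prove. Note that this is not the full $v$-tightness of $T\cup T'$, only the count along $P$, which is all the induction uses.

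\emph{Edgewise submodularity (upper bound).} For an edge $ab$, the standard inequality
\[
[a\notin T,b\in T]+[a\notin T',b\in T']\ \ge\ [a\notin T\cup T',b\in T\cup T']+[a\notin T\cap T',b\in T\cap T']
\]
holds; this is checked by a short case analysis on the membership of $a$ and $b$. Summing over the edges of $P$ gives $\mathrm{in}_P(T\cup T')+\mathrm{in}_P(T\cap T')\le 2$.

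\emph{Lower bound, the main obstacle.} We need $\mathrm{in}_P(T\cup T')\ge1$ and $\mathrm{in}_P(T\cap T')\ge1$; together with the upper bound this forces both counts to equal $1$. If $v\in T\cap T'$ this is immediate. Indeed, $P$ runs from $r\notin T\cup T'$ to $v$, so it enters every set containing $v$ but not $r$ at least once. Thus the lower bound is easy exactly when $v$ lies in every member of $\mathcal{F}$. This is the intended situation: tight sets are used as sets separating $r$ from $v$. When $\delta_D(v)=\emptyset$, the statement is trivial because $\mathcal{P}=\emptyset$.

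The delicate point is a tight set not containing $v$. A path may enter and then leave such a set, and then $\mathrm{in}_P(T\cap T')$ can be $0$. A concrete obstruction is the path $r\to a\to v$ with the sets $\{a\}$ and $\{v\}$ (if both happen to be tight), whose intersection is empty. So I would first check, or build into the definition as the paper evidently intends, that $v$-tight sets contain $v$. With that in place, the argument above completes the proof.
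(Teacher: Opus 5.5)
\textbf{Gap: the covering half of ``transversal''.} Your per-path count proves only half of what $v$-tightness requires. In this paper a transversal is a set of representatives. Besides meeting every path of $\mathcal{P}$ in exactly one edge, $\delta_D(T)$ must consist \emph{only} of edges that lie on paths of $\mathcal{P}$. This covering half is what tightness is used for later:
\begin{itemize}
\item in Lemma~\ref{lem: char no single}, the edge $f\in\delta_D(T)$ must be used by some path of $\mathcal{P}\setminus\{P_e\}$;
\item in Corollary~\ref{cor: at most n}, the contradiction is that $\mathcal{Q}$ fails to use all of $\delta_D(T)$.
\end{itemize}
The paper's proof of this lemma verifies the covering half first. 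Your quantity $\mathrm{in}_P$ cannot detect an edge of $\delta_D(\bigcup\mathcal{F})$ that lies on no path at all. So ``since $\mathcal{P}$ is arbitrary, this gives $v$-tightness'' does not follow.

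The missing step is short.
\begin{itemize}
\item If $e\in\delta_D(\bigcup\mathcal{F})$, take $T\in\mathcal{F}$ containing the head of $e$. The tail lies outside $\bigcup\mathcal{F}\supseteq T$, so $e\in\delta_D(T)$, and tightness of $T$ puts $e$ on a path of $\mathcal{P}$.
\item If $e\in\delta_D(\bigcap\mathcal{F})$, take $T\in\mathcal{F}$ missing the tail of $e$. The head lies in $\bigcap\mathcal{F}\subseteq T$, so again $e\in\delta_D(T)$.
\end{itemize}
This also covers the case $\delta_D(v)=\emptyset$, which is not trivial under this reading: there you must show that $\delta_D(\bigcup\mathcal{F})$ and $\delta_D(\bigcap\mathcal{F})$ are empty.

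\textbf{Comparison with the paper.} With this step added, your argument is correct, and it takes a different route. The paper argues directly on the whole family. If some $Q\in\mathcal{P}$ met $\delta_D(\bigcup\mathcal{F})$ twice, it would enter, leave and re-enter $\bigcup\mathcal{F}$. Take a member $T'$ containing the vertex where $Q$ leaves. Then $Q$ has entered $T'$ and now leaves it, and since $Q$ ends at $v\in T'$ it must re-enter $T'$, contradicting the tightness of $T'$. The intersection case is dual. This avoids your reduction to finite subfamilies and the induction. Your submodularity count is more mechanical, but it treats union and intersection at once and shows exactly where $v\in T$ is needed, namely in the lower bound.

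Your caveat about $v\in T$ is justified. The paper's proof also uses $v\in\bigcap\mathcal{F}$ without comment. Your example $r\to a\to v$ is a genuine counterexample under the literal definition: there $\{a\}$ and $\{v\}$ are both $v$-tight, while $\delta_D(\emptyset)=\emptyset$ is not a transversal. So $v$-tight sets must be read as containing $v$.
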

 
 \begin{proof}
 We prove the tightness of $\bigcup\mathcal{F}$; the argument for $\bigcap\mathcal{F}$ is analogous.
 Let $\mathcal{P}$ be a path system witnessing $\delta_D(v)\in\mathcal{G}_D(v)$ and let $e\in\delta_D(\bigcup\mathcal{F})$.
 Then there exists $T\in\mathcal{F}$ such that $e\in\delta_D(T)$.
 By the tightness of $T$, the edge $e$ lies on some path $P\in\mathcal{P}$.
 It remains to show that no path in $\mathcal{P}$ contains more than one edge from $\delta_D(\bigcup\mathcal{F})$.
 Suppose to the contrary that some $Q\in\mathcal{P}$ does.
 Then $Q$ enters $\bigcup\mathcal{F}$, leaves it, and reenters.
 Hence there exists $T'\in\mathcal{F}$ such that $Q$ enters and later leaves $T'$.
 Since $Q$ terminates at $v\in\bigcap\mathcal{F}$, it must reenter $T'$,
 implying that $Q$ contains more than one edge of $\delta_D(T')$, contradicting the tightness of $T'$.
 \end{proof}
 
 \begin{corollary}\label{cor: largest tight}
 If $\delta_D(v)\in\mathcal{G}_D(v)$, then there exists a $\subseteq$-largest $v$-tight set $T_{v,D}$.
 \end{corollary}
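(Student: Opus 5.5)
The plan is to take $T_{v,D}$ to be the union of all $v$-tight sets and to use Lemma~\ref{lem: tight union} to show that this union is itself $v$-tight. The only point that needs care is that Lemma~\ref{lem: tight union} applies only to a \emph{nonempty} family. So the first step is to exhibit at least one $v$-tight set.

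The natural candidate is $\{v\}$ itself. Let $\mathcal{P}$ be any path system witnessing $\delta_D(v)\in\mathcal{G}_D(v)$. Each path in $\mathcal{P}$ is an $rv$-path, so it ends at $v$ and has no internal vertex equal to $v$. Hence each path contains exactly one edge of $\delta_D(v)$, namely its last edge. Conversely, every edge of $\delta_D(v)$ lies on some path of $\mathcal{P}$, because $\mathcal{P}$ covers $\delta_D(v)$. The paths are edge-disjoint, so distinct paths contribute distinct edges. Therefore $\delta_D(\{v\})$ contains exactly one edge from each path of $\mathcal{P}$, that is, it is a transversal of $\mathcal{P}$. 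Since $\mathcal{P}$ was arbitrary, $\{v\}$ is $v$-tight.

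Now let $\mathcal{F}$ be the family of all $v$-tight sets. By the previous step it is nonempty, and it is a set because it is a subfamily of the power set of $V\setminus\{r\}$. Lemma~\ref{lem: tight union} then shows that $\bigcup\mathcal{F}$ is $v$-tight. It contains every $v$-tight set by construction, so it is the $\subseteq$-largest one, and we set $T_{v,D}:=\bigcup\mathcal{F}$.

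No step here is a real obstacle. The only subtlety is checking nonemptiness of $\mathcal{F}$, which the argument for $\{v\}$ settles using the convention that an $rv$-path has no internal vertex in $\{v\}$.
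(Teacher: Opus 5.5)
Your proof is correct and follows the paper's argument: the paper also observes that $\{v\}$ is $v$-tight, so the family of $v$-tight sets is nonempty, and takes its union, which is $v$-tight by Lemma~\ref{lem: tight union}. Your check that $\delta_D(v)$ is a transversal of every witnessing system just spells out the step the paper calls clear.
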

 
 \begin{proof}
 The singleton $\{v\}$ is clearly $v$-tight.
 Thus the family of all $v$-tight sets is nonempty, and its union is the $\subseteq$-largest $v$-tight set.
 \end{proof}

We require the following classical lemma, originally proved in the context of vertex-disjoint paths in~\cite{bohme2001menger}.

\begin{lemma}[Augmenting path lemma]\label{lem: Augmenting path lemma}
Let $\mathcal{P}$ be a system of edge-disjoint $rv$-paths in $D$, where $v\in V\setminus\{r\}$.
Then exactly one of the following possibilities occurs:
\begin{enumerate}[label=(\roman*)]
 \item\label{item: first possib}
 There exists a system $\mathcal{Q}$ of edge-disjoint $rv$-paths such that
 \[
 \lvert \mathcal{Q}\setminus\mathcal{P}\rvert
 =
 \lvert \mathcal{P}\setminus\mathcal{Q}\rvert + 1 \in \mathbb{N},
 \]
 and $\mathcal{Q}$ covers all the outgoing edges of $r$ and ingoing edges of $v$ that are covered by $\mathcal{P}$.
 \item\label{item: second possib}
 There exists a set $X\subseteq V\setminus\{r\}$ containing $v$ such that $\delta_D(X)$ is a transversal of $\mathcal{P}$.
\end{enumerate}
\end{lemma}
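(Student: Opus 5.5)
The plan is the classical residual-digraph argument, adapted so that only finitely many paths are ever rerouted.

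\textbf{Residual digraph.} Let $U:=\bigcup\mathcal{P}$ be the set of edges used by $\mathcal{P}$. Form the auxiliary digraph $D_{\mathcal{P}}$ from $D$ by reversing every edge of $U$ and keeping all other edges. Let $X$ be the set of vertices $x\ne r$ that can reach $v$ by a directed path in $D_{\mathcal{P}}$, and add $v$ itself.

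\textbf{Case 1: $r$ cannot reach $v$ in $D_{\mathcal{P}}$.} Then $r\notin X$ and $v\in X$, so $X$ is a candidate for \ref{item: second possib}. Take any $e\in\delta_D(X)$ with tail $u\notin X$ and head $w\in X$.
\begin{itemize}
\item If $e\notin U$, then $e$ is still an edge $u\to w$ in $D_{\mathcal{P}}$. So $u$ reaches $v$, giving $u\in X$, a contradiction. Hence every edge of $\delta_D(X)$ lies on a path of $\mathcal{P}$.
\item Conversely, an edge $f\in U$ leaving $X$, from $x\in X$ to $y\notin X$, becomes $y\to x$ in $D_{\mathcal{P}}$. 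Then $y$ reaches $v$, giving $y\in X$ (note $y\neq r$, since $r$ has no ingoing edges), again a contradiction. So no path of $\mathcal{P}$ leaves $X$.
\end{itemize}
Each $P\in\mathcal{P}$ starts at $r\notin X$ and ends at $v\in X$. It therefore enters $X$, and since it never leaves, it enters exactly once. Thus $\delta_D(X)$ is a transversal of $\mathcal{P}$.

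\textbf{Case 2: $r$ reaches $v$ in $D_{\mathcal{P}}$.} Choose a shortest such path $A$; it is finite and meets only finitely many paths of $\mathcal{P}$, say the set $\mathcal{P}'$.
\begin{itemize}
\item Consider the edge set $(U'\setminus \overleftarrow{A})\cup(A\setminus U)$, where $U':=\bigcup\mathcal{P}'$ and $\overleftarrow{A}$ is the set of original edges whose reversal lies on $A$.
\item The first edge of $A$ leaves $r$. Since $r$ has no ingoing edges, this edge is not a reversed edge, so it is a new edge of $D$ out of $r$. Symmetrically, the last edge of $A$ enters $v$ and is a new edge of $D$.
\item Counting in- and out-degrees, $r$ gains one out-edge, $v$ gains one in-edge, and all other vertices are balanced.
\end{itemize}
A finite Eulerian-type decomposition of this set yields $|\mathcal{P}'|+1$ edge-disjoint $rv$-walks, plus possibly some cycles which we discard. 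Shortcutting the walks to paths only removes edges. Set $\mathcal{Q}:=(\mathcal{P}\setminus\mathcal{P}')\cup\mathcal{Q}'$, where $\mathcal{Q}'$ is this new family.

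\textbf{Coverage in Case 2.} We must check that the out-edges of $r$ and in-edges of $v$ covered by $\mathcal{P}$ remain covered.
\begin{itemize}
\item Shortest-path minimality of $A$ means that $A$ visits $r$ only at its start and $v$ only at its end.
\item Hence no edge of $U$ incident to $r$ or $v$ is reversed onto $A$. Such a reversed edge would enter $r$ or leave $v$ in $D_{\mathcal{P}}$, and so would place $r$ or $v$ in the interior of $A$.
\item So those edges survive in the edge set. Since it has in-/out-degree exactly $0$ at $r$ and $v$ respectively, beyond the path ends, each surviving edge is the first or last edge of some walk. It is therefore retained after shortcutting, because shortcutting never deletes the first edge out of $r$ or the last edge into $v$ (a path is cut only at repeated vertices, and $r$, $v$ occur once each).
\end{itemize}
This gives \ref{item: first possib} with $|\mathcal{Q}\setminus\mathcal{P}|=|\mathcal{P}\setminus\mathcal{Q}|+1$, both finite.

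\textbf{Exclusivity.} Suppose both \ref{item: first possib} and \ref{item: second possib} hold. Every path of $\mathcal{Q}$ uses some edge of $\delta_D(X)$, and these edges are disjoint across paths. The paths of $\mathcal{Q}\setminus\mathcal{P}$ must use edges of $\delta_D(X)$ not used by $\mathcal{P}\cap\mathcal{Q}$. But $\delta_D(X)$ consists of exactly one edge from each path of $\mathcal{P}$, so only $|\mathcal{P}\setminus\mathcal{Q}|$ such edges are available. This contradicts $|\mathcal{Q}\setminus\mathcal{P}|=|\mathcal{P}\setminus\mathcal{Q}|+1$.

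\textbf{Main obstacle.} The delicate step is the coverage requirement in Case 2, which depends on $A$ avoiding $r$ and $v$ internally.
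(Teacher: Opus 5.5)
Your proof is correct and follows essentially the paper's route: reverse the edges of $\bigcup\mathcal{P}$, and then either take the symmetric difference of an $rv$-path $A$ of the residual digraph with the finitely many paths of $\mathcal{P}$ that have an edge reversed on $A$, or read off $X$ from reachability, with exclusivity shown by the same counting on $\delta_D(X)$. (Read $\mathcal{P}'$ as those paths with an edge reversed on $A$ rather than paths that ``meet'' $A$, since every path of $\mathcal{P}$ meets $A$ at $r$ and $v$.) The only difference is cosmetic: you take $X$ to be the vertices that can reach $v$ in the residual digraph instead of the paper's vertices not reachable from $r$, and you spell out details the paper leaves implicit (degree balance, $A$ avoiding $r$ and $v$ internally, end edges surviving the shortcutting).
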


\begin{proof}
Assume that \ref{item: second possib} holds, and let $\mathcal{P}'\subseteq\mathcal{P}$ with
$\lvert \mathcal{P}\setminus\mathcal{P}'\rvert = k \in \mathbb{N}$.
Then exactly  $k$ ingoing edges of $X$  are not used by the paths in $\mathcal{P}'$.
Since $\delta_D(X)$ meets every $rv$-path, it is impossible to add $k+1$ further $rv$-paths to $\mathcal{P}'$ in such a way that the resulting path 
system remains edge-disjoint.
Therefore, \ref{item: first possib} cannot hold.

To show that at least one of the two possibilities occurs, let $D'$ be the auxiliary digraph obtained from $D$ by reversing all edges in 
$\bigcup\mathcal{P}$.
Suppose that there exists an $rv$-path $P$ in $D'$ that uses reversed edges from $k$ paths in $\mathcal{P}$. Then the symmetric difference of $P$ 
with these 
paths can be decomposed into $k+1$ edge-disjoint $rv$-paths in $D$ covering those outgoing edges of $r$ and ingoing edges of $v$ that are 
covered by these $k$ paths.
Together with the untouched paths of $\mathcal{P}$, this yields a system $\mathcal{Q}$ as required in \ref{item: first possib}.
Otherwise, the set $X$ of vertices that are not reachable from $r$ by a path in $D'$ satisfies \ref{item: second possib}.
\end{proof}

\begin{corollary}\label{cor: n more max}
Suppose that $v\in V\setminus\{r\}$ and that $\delta_D(v)\in\mathcal{G}_D(v)$.
Let $\mathcal{P}$ be a system of edge-disjoint $rv$-paths that covers all but $n\in\mathbb{N}$ edges of $\delta_D(v)$.
Then there exists a system $\mathcal{Q}$ witnessing $\delta_D(v)\in\mathcal{G}_D(v)$ such that $\mathcal{Q}$ uses all outgoing edges of $r$ 
that are used by $\mathcal{P}$ and at most $n$ more.
\end{corollary}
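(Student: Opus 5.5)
We proceed by induction on $n$, applying the Augmenting path lemma (Lemma~\ref{lem: Augmenting path lemma}) repeatedly. Each application will either cover one more edge of $\delta_D(v)$, at the price of at most one new outgoing edge of $r$, or be impossible.

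For $n=0$ we simply take $\mathcal{Q}:=\mathcal{P}$. For $n\ge 1$ we pick an edge $e\in\delta_D(v)$ not covered by $\mathcal{P}$, with tail $u$. If $u=r$, we add the one-edge path $\{e\}$ to $\mathcal{P}$. This covers one more edge of $\delta_D(v)$ and uses exactly one new edge at $r$, so the induction hypothesis finishes the proof; the bounds add up to at most $n$.

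Otherwise we work in the auxiliary digraph $D_e$, obtained from $D$ by deleting all edges of $\delta_D(v)$ except those covered by $\mathcal{P}$ and except $e$. In $D_e$ the system $\mathcal{P}$ consists of edge-disjoint $rv$-paths. We apply Lemma~\ref{lem: Augmenting path lemma} to $\mathcal{P}$ in $D_e$.

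\emph{Case (ii).} There is $X\ni v$ with $\delta_{D_e}(X)$ a transversal of $\mathcal{P}$. Since $e\in\delta_{D_e}(v)$ is not on any path of $\mathcal{P}$, the edge $e$ is not in $\delta_{D_e}(X)$. This forces the tail $u$ to lie in $X$. We now use the system $\mathcal{R}$ witnessing $\delta_D(v)\in\mathcal{G}_D(v)$.
\begin{itemize}
\item The paths of $\mathcal{R}$ ending in $\delta_{D_e}(v)$ are $|\mathcal{P}|+1$ edge-disjoint $rv$-paths of $D_e$; indeed, their only edge at $v$ is the last one, because paths do not revisit $v$.
\item Each of these paths must cross $\delta_{D_e}(X)$.
\item But $|\delta_{D_e}(X)|=|\mathcal{P}|$, which is finite because $|\mathcal{P}|+1$ disjoint paths must cross a cut of size $|\mathcal{P}|$.
\end{itemize}
This is a contradiction when $\mathcal{P}$ is finite. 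When $\mathcal{P}$ is infinite the counting argument fails, and this is the main obstacle. To handle it, I would argue with a finite subsystem. Take the finitely many paths of $\mathcal{R}$ through $e$ and through the corresponding edges, and apply the lemma to a suitable finite part of $\mathcal{P}$ together with the remaining structure. The cleaner route is to choose $D_e$ more cleverly: keep only $e$ and the last edges of a finite subfamily $\mathcal{P}_0\subseteq\mathcal{P}$, large enough to contain all paths of $\mathcal{P}$ that meet the finitely many $\mathcal{R}$-paths through the relevant $n$ edges. Then apply the lemma to $\mathcal{P}_0$ in a digraph where the other paths of $\mathcal{P}$ are deleted. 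Either way, case (ii) is excluded.

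\emph{Case (i).} We obtain $\mathcal{Q}'$ in $D_e$ with one more path than the paths it replaces. It preserves the covered edges at $v$ and at $r$. Since the only extra edge available at $v$ in $D_e$ is $e$, the system $\mathcal{Q}'$ covers $\delta_{D_e}(v)$, which includes $e$. Its edges at $r$ are those of $\mathcal{P}$ together with at most one new one; this holds because the count $|\mathcal{Q}'\setminus\mathcal{P}|=|\mathcal{P}\setminus\mathcal{Q}'|+1$ is finite and each path starts with exactly one edge at $r$.

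So $\mathcal{Q}'$ misses $n-1$ edges of $\delta_D(v)$ and uses at most one new outgoing edge of $r$. Applying the induction hypothesis to $\mathcal{Q}'$ gives the desired system $\mathcal{Q}$, which uses at most $1+(n-1)=n$ new outgoing edges of $r$ beyond those of $\mathcal{P}$.
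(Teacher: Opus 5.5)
Your case (i) and the $u=r$ subcase are fine. Case (ii), however, is a genuine gap. No finite-subfamily repair can close it, because for infinite $\mathcal{P}$ case (ii) really does occur.

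Let $V=\{r,w,v\}$, with edges $a_i$ from $r$ to $w$ and $b_i$ from $w$ to $v$ ($i\in\mathbb{N}$). Let $\mathcal{P}=\{a_ib_{i+1} : i\in\mathbb{N}\}$, $e=b_0$ and $n=1$, so $D_e=D$. Every outgoing edge of $r$ already lies on a path of $\mathcal{P}$, so case (i) is impossible: discarding $k$ paths of $\mathcal{P}$ frees only $k$ root edges, while $k+1$ new edge-disjoint $rv$-paths need $k+1$ of them. Hence case (ii) holds, with $X=\{w,v\}$.

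The same happens for every finite $\mathcal{P}_0\subseteq\mathcal{P}$ in your restricted digraph, and necessarily so. A case-(i) outcome for $\mathcal{P}_0$ there, together with the untouched paths of $\mathcal{P}\setminus\mathcal{P}_0$, would be a finite augmentation of $\mathcal{P}$ in $D$, which does not exist. In fact every valid $\mathcal{Q}$ in this example (e.g.\ $\{a_ib_i : i\in\mathbb{N}\}$) has $\mathcal{Q}\setminus\mathcal{P}$ infinite. So no strategy built from finitely many augmentations can reach one.

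The missing idea is that case (ii) should be exploited rather than refuted, and this is what the paper does. Apply Lemma~\ref{lem: Augmenting path lemma} to $\mathcal{P}$ in $D$ itself; no $D_e$ is needed. Suppose case (ii) yields $X$.
\begin{enumerate}
\item Let $\mathcal{R}$ witness $\delta_D(v)\in\mathcal{G}_D(v)$. Cut each $R\in\mathcal{R}$ at its last edge $f_R\in\delta_D(X)$; the terminal segment from $f_R$ stays inside $X$.
\item Since $\delta_D(X)$ is a transversal of $\mathcal{P}$, the edge $f_R$ lies on a unique $P_R\in\mathcal{P}$. The initial segment of $P_R$ up to $f_R$ stays outside $X$.
\item The $f_R$ are pairwise distinct and lie on distinct paths $P_R$.
\end{enumerate}
Gluing these pieces gives edge-disjoint $rv$-paths covering all of $\delta_D(v)$ and using no outgoing edge of $r$ outside those of $\mathcal{P}$. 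So this case ends without induction, and only case (i) calls the induction hypothesis, exactly as in your argument.

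One caveat about the paper's wording. In the example, taking $\mathcal{R}=\{a_{i+1}b_i : i\in\mathbb{N}\}$, the gluing returns $\mathcal{R}$ itself, which drops $a_0$. So case (ii) gives ``no new root edges'' rather than ``the same root edges''. That weaker form is all that Corollaries~\ref{cor: n more max extra} and~\ref{cor: at most n} use.
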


\begin{proof}
We proceed by induction on $n$.
For $n=0$, the system $\mathcal{Q}:=\mathcal{P}$ is suitable.
Assume that the statement holds for $n$, and let $\mathcal{P}$ be a system of edge-disjoint $rv$-paths that covers all but $n+1$ edges of 
$\delta_D(v)$.

Apply Lemma~\ref{lem: Augmenting path lemma} to $\mathcal{P}$.
If case~\ref{item: first possib} occurs, then the resulting system $\mathcal{P}'$ must cover one additional outgoing edge of $r$ and one additional 
ingoing edge of $v$.
Applying the induction hypothesis to $\mathcal{P}'$ yields the desired system $\mathcal{Q}$.

Suppose instead that case~\ref{item: second possib} occurs, and let $X$ be the corresponding set.
Let $\mathcal{R}$ be a path system witnessing $\delta_D(v)\in\mathcal{G}_D(v)$, and let $\mathcal{R}'$ consist of the terminal segments of the 
paths in $\mathcal{R}$ starting from their last edges in $\delta_D(X)$.
For each $R\in\mathcal{R}'$, there exists a unique path $P_R\in\mathcal{P}$ containing the first edge of $R$.
Let $Q_R$ be the $rv$-path obtained by concatenating $R$ with the initial segment of $P_R$ up to its unique edge in $\delta_D(X)$.
Then
\[
\mathcal{Q}:=\{Q_R : R\in\mathcal{R}'\}
\]
is a system of edge-disjoint paths witnessing $\delta_D(v)\in\mathcal{G}_D(v)$ and using the same outgoing edges of $r$ as $\mathcal{P}$.
\end{proof}

\begin{corollary}\label{cor: n more max extra}
Let $U$ be a fillable set containing $v\in V\setminus\{r\}$, and let $\mathcal{P}$ be a system of edge-disjoint $rv$-paths that covers all but 
$n\in\mathbb{N}$ edges of $\delta_D(v)$.
Let $\mathcal{P}'$ consist of the terminal segments of the paths in $\mathcal{P}$ starting from their last edges in $\delta_D(U)$.
Then there exists a system $\mathcal{Q}$ witnessing $\delta_D(v)\in\mathcal{G}_D(v)$ such that $\mathcal{Q}$ uses all edges of $\delta_D(U)$ 
used by $\mathcal{P}'$ and at most $n$ additional edges.
\end{corollary}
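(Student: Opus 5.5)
The idea is to reduce to Corollary~\ref{cor: n more max} by contracting the outside of $U$ to the root, and then to lift the result back to $D$ using the paths that witness the fillability of $U$. I assume, as in Corollary~\ref{cor: n more max}, that $\delta_D(v)\in\mathcal{G}_D(v)$. The statement does not list this hypothesis explicitly, but it is needed because the conclusion asserts the existence of a witness.

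\textbf{Step 1: contract.} Let $D'$ be the rooted digraph obtained from $D$ by contracting $(V\setminus U)\cup\{r\}$ to $r$. Its vertex set is $U\cup\{r\}$. The edges of $\delta_D(U)$ become exactly the outgoing edges of $r$ in $D'$, and the edges inside $U$ are unchanged. In particular $\delta_{D'}(v)=\delta_D(v)$, since $v\in U$.
\begin{itemize}
\item Each path of $\mathcal{P}'$ starts with an edge of $\delta_D(U)$ and afterwards never leaves $U$, since it was cut at its last entry into $U$. Hence $\mathcal{P}'$ becomes a system of edge-disjoint $rv$-paths in $D'$. It covers the same ingoing edges of $v$ as $\mathcal{P}$, so it misses exactly $n$ edges of $\delta_{D'}(v)$. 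Its set of outgoing edges of $r$ is exactly the set of edges of $\delta_D(U)$ used by $\mathcal{P}'$.
\item Cutting a witness of $\delta_D(v)\in\mathcal{G}_D(v)$ in the same way shows that $\delta_{D'}(v)\in\mathcal{G}_{D'}(v)$.
\end{itemize}

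\textbf{Step 2: apply Corollary~\ref{cor: n more max}.} Applying it in $D'$ to the system $\mathcal{P}'$ yields a system $\mathcal{Q}'$ witnessing $\delta_{D'}(v)\in\mathcal{G}_{D'}(v)$. This system uses every outgoing edge of $r$ used by $\mathcal{P}'$, and at most $n$ further ones. In terms of $D$, this is exactly the required bound on the edges of $\delta_D(U)$ used.

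\textbf{Step 3: lift back to $D$.} Fix a system $\{P_f : f\in\delta_D(U)\}$ witnessing that $U$ is fillable. For $Q\in\mathcal{Q}'$ with first edge $f\in\delta_D(U)$, replace $f$ by the whole path $P_f$, which ends with $f$. Write $\mathcal{Q}$ for the resulting system.
\begin{itemize}
\item Each $P_f$ is an $rU$-path, so all its vertices except the head of $f$ lie outside $U$. The remainder of $Q$ lies inside $U$. Hence the concatenation is a genuine $rv$-path in $D$.
\item Edge-disjointness has three parts. Distinct $Q$ have distinct first edges $f$, and the paths $P_f$ are pairwise edge-disjoint. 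The parts of the paths in $\mathcal{Q}'$ inside $U$ are edge-disjoint by construction. No $P_f$ has an edge with both ends in $U$, so the two kinds of parts never share an edge.
\end{itemize}
Thus $\mathcal{Q}$ covers $\delta_D(v)$, and it uses exactly the same edges of $\delta_D(U)$ as $\mathcal{Q}'$ uses at $r$, as required.

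The main obstacle I expect is Step 1: checking carefully that the contraction behaves as claimed. Edges with both ends in $(V\setminus U)\cup\{r\}$ are deleted, and edges from $U$ to the outside would become ingoing edges of $r$ and are therefore also deleted. Neither kind can lie on a truncated path, so the corollary is still applicable. Beyond that, the only delicate point is whether the hypothesis $\delta_D(v)\in\mathcal{G}_D(v)$ is available in the intended setting.
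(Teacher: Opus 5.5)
Your proof is correct and follows the paper's argument: contract $V\setminus U$ (which contains $r$) to the root, apply Corollary~\ref{cor: n more max} to $\mathcal{P}'$ in the contracted digraph, and prepend the paths witnessing that $U$ is fillable. Your checks of the path property and of edge-disjointness fill in details the paper leaves implicit, and you are right that the hypothesis $\delta_D(v)\in\mathcal{G}_D(v)$ is needed but only tacitly assumed there (it holds in the sole application, Corollary~\ref{cor: at most n}, since $v$-tightness presupposes it).
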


\begin{proof}
Let $D'=(U\cup\{r\},E')$ be the auxiliary digraph obtained from $D$ by contracting $V\setminus U$ to $r$.
Apply Corollary~\ref{cor: n more max} to the path system $\mathcal{P}'$  in $D'$.
Finally, extend the resulting paths backward to $rv$-paths of $D$ using the fact that $U$ is fillable in $D$.
\end{proof}

\begin{corollary}\label{cor: at most n}
Let $T$ be a $v$-tight set, and let $\mathcal{P}$ be a system of edge-disjoint $rv$-paths that covers all but $n\in\mathbb{N}$ edges of 
$\delta_D(v)$.
Let $\mathcal{P}'$ consist of the terminal segments of the paths in $\mathcal{P}$ starting from their last edges in $\delta_D(T)$.
Then at most $n$ edges of $\delta_D(T)$ are not used by the paths in $\mathcal{P}'$.
\end{corollary}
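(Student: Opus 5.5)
The plan is to derive this from Corollary~\ref{cor: n more max extra} applied with $U:=T$, which is legitimate because $T$ is fillable by Observation~\ref{obs: tight is fillable}. It contains $v$ since every path witnessing $\delta_D(v)\in\mathcal{G}_D(v)$ ends at $v$ and must meet $\delta_D(T)$. (If $v\notin T$, some witnessing path would have to leave $T$ after entering it, and then no entry edge could be its unique edge of $\delta_D(T)$. More simply, tight sets in this paper are meant to contain $v$, as the union argument in Lemma~\ref{lem: tight union} implicitly uses, so I would take $v\in T$ as part of the setting.)

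Apply Corollary~\ref{cor: n more max extra} to $\mathcal{P}$ and $U=T$. This yields a system $\mathcal{Q}$ witnessing $\delta_D(v)\in\mathcal{G}_D(v)$ with two properties:
\begin{itemize}
\item $\mathcal{Q}$ uses every edge of $\delta_D(T)$ that is used by $\mathcal{P}'$;
\item $\mathcal{Q}$ uses at most $n$ further edges of $\delta_D(T)$.
\end{itemize}
Because $T$ is $v$-tight, $\delta_D(T)$ is a transversal of $\mathcal{Q}$. Each $Q\in\mathcal{Q}$ therefore contains exactly one edge of $\delta_D(T)$, and every edge of $\delta_D(T)$ lies on some path of $\mathcal{Q}$. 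In particular, $\mathcal{Q}$ uses all of $\delta_D(T)$.

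Combining the two facts, every edge of $\delta_D(T)$ is either used by $\mathcal{P}'$ or is one of the at most $n$ additional edges. Hence at most $n$ edges of $\delta_D(T)$ are missed by $\mathcal{P}'$, which is the claim.

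The main point needing care is the bookkeeping of the phrase ``at most $n$ additional edges'' in Corollary~\ref{cor: n more max extra}. It has to be read as at most $n$ edges of $\delta_D(T)$ beyond those used by $\mathcal{P}'$. This holds because the contraction $D'$ turns $\delta_D(T)$ into exactly the outgoing edges of $r$, so Corollary~\ref{cor: n more max} controls precisely these edges. The backward extension through the fillability of $T$ uses only edges outside $T$ and does not alter which edges of $\delta_D(T)$ are used. So no obstacle arises beyond checking this interpretation.
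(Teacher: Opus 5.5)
Your proposal is correct and is the paper's argument: apply Corollary~\ref{cor: n more max extra} with $U=T$ (fillable by Observation~\ref{obs: tight is fillable}), then use tightness to see that the resulting $\mathcal{Q}$ covers all of $\delta_D(T)$; the paper phrases this as a contradiction, while you argue directly. One caveat is that your parenthetical claim that tightness forces $v\in T$ is wrong (in $r\to a\to v$ the set $\{a\}$ meets the unique witnessing path in exactly one edge of $\delta_D(\{a\})$), so $v\in T$ must simply be taken as part of the intended definition, as the paper does implicitly here and in Lemma~\ref{lem: tight union}.
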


\begin{proof}
Suppose, for a contradiction, that more than $n$ edges of $\delta_D(T)$ are not used by the paths in $\mathcal{P}'$.
Since tight sets are fillable (see Observation~\ref{obs: tight is fillable}), we may apply Corollary~\ref{cor: n more max extra} with $T$ in the role 
of $U$.
The resulting system $\mathcal{Q}$ then fails to use all edges of $\delta_D(T)$, contradicting the assumption that $T$ is $v$-tight.
\end{proof}

\begin{lemma}\label{lem: char no single}
Let $v\in V\setminus\{r\}$ and let $e\in\delta_D(v)$ be such that
$\delta_D(v)\setminus\{e\}\in\mathcal{G}_D(v)$.
Then $\delta_D(v)\notin\mathcal{G}_D(v)$ if and only if the tail $u$ of $e$ belongs to $T_{v,D-e}$.
\end{lemma}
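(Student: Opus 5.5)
The plan is to handle the two implications separately. First I record a simple observation. An $rv$-path uses exactly one ingoing edge of $v$, namely its last edge. Hence any path system witnessing $\delta_D(v)\setminus\{e\}\in\mathcal{G}_D(v)$ avoids $e$ and lives in $D-e$. Conversely, if $\mathcal{Q}$ witnesses $\delta_D(v)\in\mathcal{G}_D(v)$ and $Q_e\in\mathcal{Q}$ is the path ending in $e$, then $\mathcal{Q}\setminus\{Q_e\}$ witnesses $\delta_{D-e}(v)\in\mathcal{G}_{D-e}(v)$. In particular the hypothesis gives $\delta_{D-e}(v)\in\mathcal{G}_{D-e}(v)$, so $T:=T_{v,D-e}$ exists by Corollary~\ref{cor: largest tight}.

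For ``$\Leftarrow$'', assume $u\in T$ and suppose for a contradiction that $\mathcal{Q}$ witnesses $\delta_D(v)\in\mathcal{G}_D(v)$. Then $\mathcal{Q}\setminus\{Q_e\}$ witnesses $\delta_{D-e}(v)\in\mathcal{G}_{D-e}(v)$. By tightness of $T$ in $D-e$, every edge of $\delta_{D-e}(T)$ lies on some path of $\mathcal{Q}\setminus\{Q_e\}$. The path $Q_e$ starts at $r\notin T$ and its last edge $e$ has tail $u\in T$. So some earlier edge $f$ of $Q_e$ lies in $\delta_D(T)$, and $f\neq e$ because both endpoints of $e$ lie in $T$. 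Thus $f\in\delta_{D-e}(T)$ is used by $Q_e$ and by another path of $\mathcal{Q}$, contradicting edge-disjointness.

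For ``$\Rightarrow$'', I argue by contraposition: assume $u\notin T$ and show $\delta_D(v)\in\mathcal{G}_D(v)$. Take a witness $\mathcal{P}$ of $\delta_{D-e}(v)\in\mathcal{G}_{D-e}(v)$ and apply Lemma~\ref{lem: Augmenting path lemma} to $\mathcal{P}$ in $D$.
\begin{itemize}
\item In case~\ref{item: first possib} the new system has one more path and still covers every edge of $\delta_D(v)\setminus\{e\}$. Its extra ingoing edge of $v$ must therefore be $e$, so it witnesses $\delta_D(v)\in\mathcal{G}_D(v)$ and we are done.
\item In case~\ref{item: second possib} we obtain $X\ni v$ with $r\notin X$ such that $\delta_D(X)$ is a transversal of $\mathcal{P}$. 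Since $e$ is not used by $\mathcal{P}$, $e\notin\delta_D(X)$, hence $u\in X$ and $\delta_{D-e}(X)=\delta_D(X)$. The goal is to derive a contradiction by showing that $X$ is $v$-tight in $D-e$, which forces $u\in X\subseteq T$.
\end{itemize}

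The main obstacle is exactly this last step. Tightness must hold for \emph{every} witness $\mathcal{P}'$ in $D-e$, whereas $X$ is only known to be a transversal for the one system $\mathcal{P}$. I would proceed as follows.
\begin{itemize}
\item First choose $X$ canonically: take $X$ to be the set of vertices unreachable from $r$ in the residual digraph of $\mathcal{P}$ in $D$, exactly as in the proof of Lemma~\ref{lem: Augmenting path lemma}.
\item Given another witness $\mathcal{P}'$, I would first try to show that case~\ref{item: first possib} fails for $\mathcal{P}'$ as well. Otherwise, adding $e$ would succeed and we would be done.
\item Case~\ref{item: second possib} for $\mathcal{P}'$ then yields a set $X'\ni u,v$.
\item To pass between the two systems, I would splice them. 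Take the terminal segments of $\mathcal{P}'$ from their last edges in $\delta(X)$ and prepend the initial segments of $\mathcal{P}$ up to their unique edges in $\delta(X)$, in the spirit of the proofs of Corollary~\ref{cor: n more max} and Corollary~\ref{cor: at most n}. This should show that if $\delta(X)$ were not a transversal of $\mathcal{P}'$, one could build a residual path that reaches $u$ and is then completed by $e$, giving case~\ref{item: first possib} for a suitable witness.
\end{itemize}
If this direct route is messy, the fallback is the following. Collect the sets obtained this way over all witnesses, which all contain $u$ and $v$. Then use Lemma~\ref{lem: tight union}-style crossing arguments to show that their intersection is $v$-tight in $D-e$ while still containing $u$, again contradicting $u\notin T$.
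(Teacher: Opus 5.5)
Your ``$\Leftarrow$'' direction is correct and is exactly the paper's argument. The gap is in ``$\Rightarrow$''. It is more than an unfinished step: the set you try to prove tight is the wrong candidate.

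In the infinite setting, the residual-unreachable set $X$ of a single witness need not be $v$-tight in $D-e$, even when every witness falls into case~\ref{item: second possib}. Take vertices $r,u,v,z_0,w_1,w_2,\dots$ and the following edges:
\begin{itemize}
\item $a\colon r\to u$ and two parallel edges $e,g_0\colon u\to v$;
\item $z\colon r\to z_0$ and $b\colon z_0\to w_1$;
\item $c_i\colon r\to w_i$ for $i\ge 2$;
\item $d_i\colon w_{i+1}\to w_i$ and $g_i\colon w_i\to v$ for $i\ge 1$.
\end{itemize}
Since $a$ is the only ingoing edge of $u$, we have $\delta_D(v)\notin\mathcal{G}_D(v)$. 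The witness $\{ag_0,\, zbg_1\}\cup\{c_ig_i : i\ge 2\}$ uses every edge leaving $r$, so its residual-unreachable set is $X=V\setminus\{r\}$. However, the witness $\{ag_0\}\cup\{c_{i+1}d_ig_i : i\ge 1\}$ leaves $z\in\delta_D(X)$ unused. So $X$ is not tight. The implication your splicing step is meant to deliver (if $\delta(X)$ is not a transversal of another witness, then $e$ can be added) is therefore false.

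Your fallback has no mechanism either. Lemma~\ref{lem: tight union} concerns sets already known to be tight, whereas each of your sets is a transversal only for its own witness.

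The missing idea is minimality. The paper takes $T:=\mathsf{fill}_D(\{u,v\})$ (Corollary~\ref{cor: fillable closure}); in the example this is $\{u,v\}$, which is indeed $v$-tight in $D-e$. It then shows directly that $T$ is $v$-tight in $D-e$, noting $\delta_{D-e}(T)=\delta_D(T)$. The argument runs as follows:
\begin{itemize}
\item Suppose some witness $\mathcal{P}$ in $D-e$ does not have $\delta_D(T)$ as a transversal. Then the terminal segments $\mathcal{P}'$ from the last edges in $\delta_D(T)$ leave some edge of $\delta_D(T)$ unused.
\item Contract $V\setminus T$ to $r$ and apply Lemma~\ref{lem: Augmenting path lemma} to $\mathcal{P}'$ in this contracted digraph. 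The key point is that the lemma is applied relative to the offending witness, not to a fixed one.
\item Case~\ref{item: first possib}, combined with the fillability of $T$ in $D$, gives $\delta_D(v)\in\mathcal{G}_D(v)$.
\item Case~\ref{item: second possib} gives a set $X\ni v$ with $u\in X$, because $e$ is unused. This $X$ is fillable in the contracted digraph, hence in $D$. It also misses the head of the unused edge, so $X\subsetneq T$, contradicting the minimality of $T$.
\end{itemize}
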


\begin{proof}
Assume first that $u\in T_{v,D-e}=:T$ and suppose, for a contradiction, that $\delta_D(v)\in\mathcal{G}_D(v)$ is witnessed by a path system 
$\mathcal{P}$.
Let $P_e$ be the unique path in $\mathcal{P}$ whose last edge is $e$.
Since $T$ is $v$-tight in $D-e$, the set $\delta_{D-e}(T)$ is a transversal of $\mathcal{P}\setminus\{P_e\}$.
Clearly, $\delta_{D-e}(T)=\delta_D(T)$ because $u\in T$.
As $P_e$ is an $rv$-path, it contains an edge $f\in\delta_D(T)$.
However, since $f$ is also used by some path in $\mathcal{P}\setminus\{P_e\}$, the system $\mathcal{P}$ is not edge-disjoint, a contradiction.

For the converse, assume that $\delta_D(v)\notin\mathcal{G}_D(v)$.
It suffices to show that the set $T:=\mathsf{fill}_D(\{u,v\})$ (see Corollary \ref{cor: fillable closure}) is $v$-tight in $D-e$.
Suppose, for a contradiction, that it is not.
Then there exists a path system $\mathcal{P}$ in $D-e$ witnessing
$\delta_D(v)\setminus\{e\}\in\mathcal{G}_D(v)$ such that $\delta_{D-e}(T)=\delta_D(T)$ is not a transversal of $\mathcal{P}$.
Let $\mathcal{P}'$ consist of the terminal segments of the paths in $\mathcal{P}$ starting from their last edges in $\delta_D(T)$.
By the choice of $\mathcal{P}$, the initial edges of the paths in $\mathcal{P}'$ form a proper subset of $\delta_D(T)$ (see Figure~\ref{fig: cannot 
add e}).

Let $D'=(T\cup\{r\},E')$ be the auxiliary digraph obtained from $D$ by contracting $V\setminus T$ to $r$.
Then $\mathcal{P}'$ is a system of edge-disjoint $rv$-paths in $D'$ covering all ingoing edges of $v$ except $e$.
We apply Lemma~\ref{lem: Augmenting path lemma} in $D'$ to $\mathcal{P}'$.
   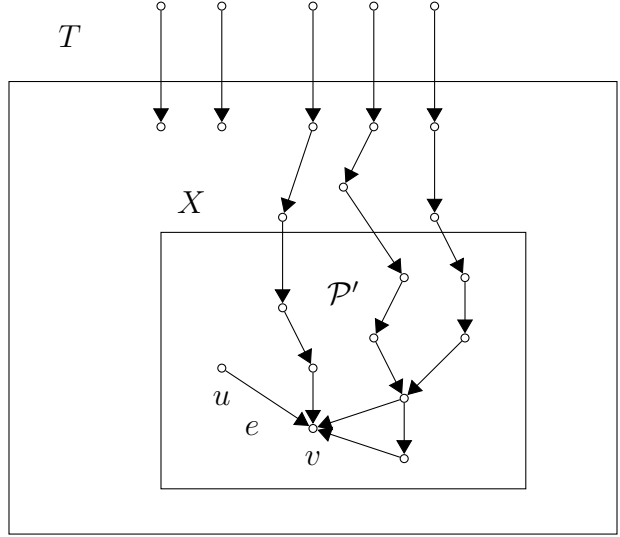
\begin{figure}[h]
     \centering
     
   \begin{tikzpicture}[scale=2]
   
   \draw  (-2,2.5) rectangle (2,-0.5);
   \node[circle,inner sep=0pt,draw,minimum size=3] (v1) at (-0.6,0.6) {};
   \node[circle,inner sep=0pt,draw,minimum size=3] (v2) at (0,0.2) {};
   \draw[-triangle 60]   (v1) edge (v2);

   \node at (-0.6,0.4) {$u$};
   \node at (0,0) {$v$};

   \node[circle,inner sep=0pt,draw,minimum size=3] (v3) at (-1,3) {};
   \node[circle,inner sep=0pt,draw,minimum size=3] (v4) at (-1,2.2) {};
   \node[circle,inner sep=0pt,draw,minimum size=3] (v5) at (-0.6,3) {};
   \node[circle,inner sep=0pt,draw,minimum size=3] (v6) at (-0.6,2.2) {};
   \draw[-triangle 60]  (v3) edge (v4);
   \draw[-triangle 60]  (v5) edge (v6);
   \node[circle,inner sep=0pt,draw,minimum size=3] (v7) at (0,3) {};
   \node[circle,inner sep=0pt,draw,minimum size=3] (v8) at (0,2.2) {};
   \node[circle,inner sep=0pt,draw,minimum size=3] (v9) at (-0.2,1.6) {};
   \node[circle,inner sep=0pt,draw,minimum size=3] (v10) at (-0.2,1) {};
   \node[circle,inner sep=0pt,draw,minimum size=3] (v11) at (0,0.6) {};
   
   \draw[-triangle 60]   (v7) edge (v8);
   \draw[-triangle 60]   (v8) edge (v9);
   \draw[-triangle 60]   (v9) edge (v10);
   \draw[-triangle 60]   (v10) edge (v11);
   \draw[-triangle 60]   (v11) edge (v2);
   \node[circle,inner sep=0pt,draw,minimum size=3] (v12) at (0.4,3) {};
   \node[circle,inner sep=0pt,draw,minimum size=3] (v13) at (0.4,2.2) {};
   \node[circle,inner sep=0pt,draw,minimum size=3] (v18) at (0.8,3) {};
   \node[circle,inner sep=0pt,draw,minimum size=3] (v19) at (0.8,2.2) {};

   \node[circle,inner sep=0pt,draw,minimum size=3] (v14) at (0.2,1.8) {};
   \node[circle,inner sep=0pt,draw,minimum size=3] (v15) at (0.6,1.2) {};
   \node[circle,inner sep=0pt,draw,minimum size=3] (v16) at (0.4,0.8) {};
   \node[circle,inner sep=0pt,draw,minimum size=3] (v17) at (0.6,0.4) {};
   
   \draw[-triangle 60]   (v12) edge (v13);
   \draw[-triangle 60]   (v13) edge (v14);
   \draw[-triangle 60]   (v14) edge (v15);
   \draw[-triangle 60]   (v15) edge (v16);
   \draw[-triangle 60]   (v16) edge (v17);
   \draw[-triangle 60]   (v17) edge (v2);
   \node[circle,inner sep=0pt,draw,minimum size=3]  (v20) at (0.8,1.6) {};
   \node[circle,inner sep=0pt,draw,minimum size=3] (v21) at (1,1.2) {};
   \node[circle,inner sep=0pt,draw,minimum size=3] (v22) at (1,0.8) {};

   \node[circle,inner sep=0pt,draw,minimum size=3] (v23) at (0.6,0) {};
   \draw[-triangle 60]   (v18) edge (v19);
   \draw [-triangle 60]  (v19) edge (v20);
   \draw[-triangle 60]   (v20) edge (v21);
   \draw[-triangle 60]   (v21) edge (v22);
   \draw[-triangle 60]   (v22) edge (v17);
   \draw[-triangle 60]   (v17) edge (v23);
   \draw[-triangle 60]   (v23) edge (v2);
   \draw  (-1,1.5) rectangle (1.4,-0.2);
   \node at (-0.8,1.7) {$X$};
   \node at (-1.6,2.8) {$T$};
   \node at (-0.4,0.2) {$e$};
   \node at (0.2,1.1) {$\mathcal{P}'$};
   \end{tikzpicture}
     \caption{The tightness of $T$ in Lemma \ref{lem: char no single}} \label{fig: cannot add e}
     \end{figure}

  If case~\ref{item: first possib} occurs, then the resulting path system witnesses that
  $\delta_{D'}(v)\in \mathcal{G}_{D'}(v)$. Hence, by the construction of $D'$ and the
  fillability of $T$, we conclude that $\delta_{D}(v)\in \mathcal{G}_{D}(v)$, which is a
  contradiction.
  
  Let $X$ be as in case~\ref{item: second possib}. Then $\delta_{D'}(X)$ is a transversal of
  $\mathcal{P}'$. Since $e\notin \bigcup \mathcal{P}'$, we must have
  $e\notin \delta_{D'}(X)$, and thus $u\in X$. The initial segments of the paths in
  $\mathcal{P}'$ up to their first edges in $X$ witness that $X$ is fillable in $D'$.
  Since $T\supseteq X$ is fillable in $D$, it follows that $X$ is fillable in $D$ as well.
  Finally, $X$ must be a proper subset of $T$, because the heads of ingoing edges of $T$
  that are not used by $\mathcal{P}'$ cannot lie in $X$. This contradicts the fact that $T$
  is the smallest fillable set containing $\{u,v\}$.
 
\end{proof}
\begin{corollary}\label{cor: superfillable}
If $v\in V\setminus\{r\}$ and $e\in \delta_D(T_{v,D-e})$, then $T_{v,D-e}$ is fillable in
$D$ (and not only in $D-e$).
\end{corollary}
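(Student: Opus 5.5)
Throughout, $T:=T_{v,D-e}$. Implicitly $\delta_{D-e}(v)\in\mathcal{G}_{D-e}(v)$, since otherwise $T$ is undefined. The plan is to reduce fillability of $T$ in $D$ to one augmentation step, and to rule out the obstructing case using the maximality of $T$.

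By Observation~\ref{obs: tight is fillable}, $T$ is fillable in $D-e$. So there is a system $\mathcal{P}=\{P_f : f\in\delta_D(T)\setminus\{e\}\}$ of edge-disjoint $rT$-paths in $D-e$ in which $P_f$ ends with $f$. I will take $\mathcal{P}$ to consist of the initial segments, up to their unique edges in $\delta_D(T)$, of a fixed system $\mathcal{Q}$ witnessing $\delta_{D-e}(v)\in\mathcal{G}_{D-e}(v)$. It remains to add one further path ending with $e$, possibly after rerouting. Let $D_T$ be the digraph obtained from $D$ by contracting $T$ to a single (non-root) vertex $t$. Then $\delta_{D_T}(t)=\delta_D(T)$, and $\mathcal{P}$ is a system of edge-disjoint $rt$-paths in $D_T$ covering all of $\delta_{D_T}(t)$ except $e$. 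Apply Lemma~\ref{lem: Augmenting path lemma} to $\mathcal{P}$ in $D_T$.

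In case~\ref{item: first possib}, the new system covers all ingoing edges of $t$ covered before plus one more, which must be $e$. Uncontracting, its paths are edge-disjoint $rT$-paths whose last edges are exactly the edges of $\delta_D(T)$, one each. (Any path that meets $T$ earlier can be truncated at its first edge into $T$; this keeps it distinct as long as we truncate at the unique $\delta_D(T)$-edge it ends with, since a path of an $rt$-system in $D_T$ passes through $t$ only at its end.) Hence $T$ is fillable in $D$.

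So the work is to exclude case~\ref{item: second possib}. Suppose $X\ni t$ is such that $\delta_{D_T}(X)$ is a transversal of $\mathcal{P}$. As $e$ is unused by $\mathcal{P}$, $e\notin\delta_{D_T}(X)$. Since the head of $e$ is in $X$, this forces the tail $u\notin T$ of $e$ into $X$. Let $X':=(X\setminus\{t\})\cup T\supsetneq T$; note $X'\subseteq V\setminus\{r\}$. I would show $X'$ is $v$-tight in $D-e$, contradicting the maximality of $T$ (Corollary~\ref{cor: largest tight}).

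For our fixed $\mathcal{Q}$, $\delta_D(X')$ is indeed a transversal. Each $Q\in\mathcal{Q}$ meets $\delta_D(X')$ exactly once before entering $T$, because $\delta_{D_T}(X)$ is a transversal of $\mathcal{P}$. After entering $T$, $Q$ can never leave and re-enter $X'$. Indeed, leaving $X'$ means leaving $T$, and since $Q$ ends at $v\in T$ it would then have to re-enter $T$, contradicting the tightness of $T$.

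The main obstacle is passing from this single system $\mathcal{Q}$ to all witnessing systems $\mathcal{Q}'$, as the definition of tightness demands. I would proceed as in the converse of Lemma~\ref{lem: char no single}. Given $\mathcal{Q}'$, glue the terminal segments of $\mathcal{Q}'$ after their edges in $\delta_D(T)$ onto the initial segments of $\mathcal{Q}$; this shows the relevant structure is determined by behaviour outside $T$. Then apply Corollary~\ref{cor: at most n} (with $n=0$) together with the first half of the proof of Lemma~\ref{lem: Augmenting path lemma}. The latter says that a cut which is a transversal of one complete system admits no augmentation. From this, conclude that any $\mathcal{Q}'$ must also use every edge of $\delta_D(X')$ exactly once. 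If this direct exchange argument fails, the fallback is to redo the augmentation with $\mathcal{P}$ derived from an arbitrary $\mathcal{Q}'$, and use Lemma~\ref{lem: tight union} to merge the resulting sets into a single tight set strictly larger than $T$.
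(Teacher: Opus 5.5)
There is a genuine gap: your claim that $X'$ is $v$-tight in $D-e$ is false in general. Case~\ref{item: second possib} can genuinely occur for your fixed $\mathcal{Q}$ even though $T$ is fillable in $D$, so no contradiction can be extracted from it.

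Here is a counterexample. Take vertices $r,v,z_0,z_1,\dots$ and edges $g_i\colon r\to z_i$, $h_i\colon z_i\to z_{i-1}$, $f_i\colon z_i\to v$ for $i\ge 1$, together with $e\colon z_0\to v$. In this digraph $T_{v,D-e}=\{v\}$, for the following reason. A $v$-tight $S$ can have no $h_k$ in $\delta_{D-e}(S)$, since the system $\{g_if_i : i\ge 1\}$ does not use them. Hence $S\cap\{z_0,z_1,\dots\}$ is empty or of the form $\{z_j : j\ge m\}$. For each nonempty option, the shifted system $\{g_{i+1}h_{i+1}f_i : i\ge 1\}$ violates tightness: it crosses $\delta(S)$ twice (via $g_m$ and $f_{m-1}$) when $m\ge 2$, and it misses $g_1\in\delta(S)$ when $m\le 1$.

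Now fix $\mathcal{Q}=\{g_if_i : i\ge 1\}$. Every out-edge of $r$ is used, so nothing is reachable from $r$ in the residual graph, and case~\ref{item: second possib} holds with $X=X'=V\setminus\{r\}\ni z_0$. But $X'$ is not tight, because the shifted system never uses $g_1$. Meanwhile $\{v\}$ is fillable in $D$: take the path $g_1h_1e$ together with $\{g_{i+1}h_{i+1}f_i : i\ge 1\}$.

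The tools you invoke cannot close this gap:
\begin{itemize}
\item The first half of Lemma~\ref{lem: Augmenting path lemma} only excludes exchanges changing finitely many paths. The two systems above differ in infinitely many paths, a Hilbert-hotel shift.
\item Corollary~\ref{cor: at most n} presupposes that the set is already tight, which is what you are trying to prove.
\item Your fallback via Lemma~\ref{lem: tight union} needs tight sets as input. The sets produced by case~\ref{item: second possib} for individual systems are not tight, as the example shows.
\end{itemize}

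Two ideas are missing. First, argue by contradiction from the assumption that $T$ is \emph{not} fillable in $D$, so that augmentation fails for \emph{every} system rather than only the one you fixed. Second, replace the set of unreachable vertices by a canonical minimal set.

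This is what the paper does. It contracts $T$ to $v$, obtaining $D'$ with $\delta_{D'-e}(v)\in\mathcal{G}_{D'-e}(v)$ but $\delta_{D'}(v)\notin\mathcal{G}_{D'}(v)$, and then applies Lemma~\ref{lem: char no single}. In that lemma, $\mathsf{fill}_{D'}(\{u,v\})$ is shown to be tight in $D'-e$. Any witnessing system that fails the transversal property would, after augmentation, either give $\delta_{D'}(v)\in\mathcal{G}_{D'}(v)$ or produce a strictly smaller fillable set containing $u$ and $v$.

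The resulting $T'$ is therefore tight with respect to \emph{all} witnessing systems in $D'-e$. Every witnessing system in $D-e$ truncates at $\delta_{D-e}(T)$ to a witnessing system in $D'-e$, and its remaining segments stay inside $T$. Hence the expansion $T''\supsetneq T$ of $T'$ is $v$-tight in $D-e$, contradicting the maximality of $T$.
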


\begin{proof}
Suppose, for a contradiction, that this is not the case. Let $D'$ be the auxiliary
digraph obtained from $D$ by contracting $T_{v,D-e}=:T$ to $v$. To obtain a contradiction,
we will apply Lemma~\ref{lem: char no single} to find a strictly larger $v$-tight set,
contradicting the maximality of $T_{v,D-e}$.

By construction, $\delta_{D'-e}(v)=\delta_{D-e}(T)$. Since $T_{v,D-e}$ is fillable in
$D-e$ by Observation~\ref{obs: tight is fillable}, we have
$\delta_{D'-e}(v)\in \mathcal{G}_{D'-e}(v)$. By our assumption,
$\delta_{D'}(v)\notin \mathcal{G}_{D'}(v)$, and therefore
Lemma~\ref{lem: char no single} yields a $v$-tight set $T'$ in $D'-e$ that contains the
tail $u$ of $e$.

Let $T''$ be the vertex set in $D$ corresponding to $T'$, obtained by expanding
the vertex $v\in T'$ to the set $T$. Then $T''\supsetneq T$, since
$u\in T''\setminus T$. To obtain a contradiction, we show that $T''$ is tight in $D-e$.

Let $\mathcal{P}$ be a path system in $D-e$ witnessing
$\delta_{D-e}(v)\in \mathcal{G}_{D-e}(v)$. Since $T$ is $v$-tight in $D-e$, the set
$\delta_{D-e}(T)=\delta_{D'-e}(v)$ is a transversal of $\mathcal{P}$. Let $\mathcal{P}'$
consist of the initial segments of the paths in $\mathcal{P}$ up to their first edges in
$\delta_{D-e}(T)$. Then $\mathcal{P}'$ is a system of edge-disjoint paths in $D'-e$
witnessing $\delta_{D'-e}(v)\in \mathcal{G}_{D'-e}(v)$.

Since $T'$ is tight in $D'-e$, the set $\delta_{D'-e}(T')$ is a transversal of
$\mathcal{P}'$. As $\delta_{D'-e}(T')=\delta_{D-e}(T'')$, it follows that
$\delta_{D-e}(T'')$ is a transversal of $\mathcal{P}$. Hence $T''$ is tight in $D-e$,
which completes the proof.
\end{proof}

\begin{corollary}\label{cor: insert one}
If $F\subseteq E$ is a flame, $v\in V\setminus\{r\}$, and $e\in E\setminus F$ such that
$e\in \delta_D(T_{v,F})$, then $F\cup\{e\}$ is a flame.
\end{corollary}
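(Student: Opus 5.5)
Throughout, we work in the rooted digraph $F\cup\{e\}$, i.e.\ we apply the preceding results with $D:=F\cup\{e\}$, so that $D-e=F$ and $T:=T_{v,F}=T_{v,D-e}$. The hypothesis $e\in\delta_D(T)$ is exactly the hypothesis of Corollary~\ref{cor: superfillable}. We must show that every vertex $w\in V\setminus\{r\}$ satisfies $\delta_{F\cup\{e\}}(w)\in\mathcal{G}_{F\cup\{e\}}(w)$. Let $x$ be the head of $e$; note $x\in T$.

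First, the vertices $w\neq x$. Here $\delta_{F\cup\{e\}}(w)=\delta_F(w)$. Since $F$ is a flame, a witnessing path system for $w$ in $F$ is also one in $F\cup\{e\}$. Adding an edge never destroys path systems, so these vertices are unproblematic.

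It remains to treat $w=x$. By Corollary~\ref{cor: superfillable}, $T$ is fillable in $F\cup\{e\}$. So there is a system $\{P_f : f\in\delta_{F\cup\{e\}}(T)\}$ of edge-disjoint $rT$-paths in $F\cup\{e\}$ with $P_f$ ending in $f$, and in particular $P_e$ ends with $e$. Next we want to route from $\delta_{F\cup\{e\}}(T)$ to every ingoing edge of $x$ inside $T$. Since $F$ is a flame and $x\in T\setminus\{r\}$, there is a system $\mathcal{R}$ in $F$ witnessing $\delta_F(x)\in\mathcal{G}_F(x)$. Let $\mathcal{R}'$ be the terminal segments of its paths from their last edges in $\delta_F(T)$. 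Contract $V\setminus T$ to $r$ to get $D'$. There $\mathcal{R}'$ is a system of edge-disjoint $rx$-paths covering $\delta_F(x)$, and $e$ is an extra edge of $\delta_{D'}(T)$ entering $x$ (if $x$ is its head inside $T$). If $e$ enters $x$ directly from outside $T$, it is itself an $rx$-path in $D'$, disjoint from $\mathcal{R}'$. Together these witness $\delta_{D'}(x)\in\mathcal{G}_{D'}(x)$. We then extend backwards through the fillability paths $P_f$ exactly as in the proof of Corollary~\ref{cor: n more max extra}.

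Hence the case $x=v$, or more generally $e$ entering the head directly from outside $T$, works. The main obstacle is that $e$'s head $x$ is merely some vertex of $T$ and $e$ is an edge of $\delta_D(T)$. Since $e$ enters $T$ and its head is $x$, $e$ itself is an ingoing edge of $x$ whose tail lies outside $T$. So the case just handled is in fact the general one: in $D'$, the single edge $e$ is an $rx$-path edge-disjoint from $\mathcal{R}'$, because $\mathcal{R}'\subseteq F$.

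The delicate point is the backward extension. The paths of $\mathcal{R}'\cup\{\{e\}\}$ start with distinct edges of $\delta_{F\cup\{e\}}(T)$, and prefixing each with the corresponding $P_f$ preserves edge-disjointness. This holds because the $P_f$ live in $V\setminus T$ up to their last edge, while the segments of $\mathcal{R}'$ live inside $T$. This is exactly where fillability of $T$ in $F\cup\{e\}$, not merely in $F$, is essential: it supplies $P_e$ disjoint from the other prefixes.
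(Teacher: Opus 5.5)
Your proof is correct and follows the paper's argument. You use Corollary~\ref{cor: superfillable} to make $T=T_{v,F}$ fillable in $F\cup\{e\}$, take the terminal segments (from their last edges in $\delta_F(T)$) of a system witnessing $\delta_F(x)\in\mathcal{G}_F(x)$, prefix them with the corresponding fillability paths, and add $P_e$. Your explicit distinction between $v$ and the head $x$ of $e$ (which the paper glosses over with ``let $v$ be the head of $e$'') and your handling of the vertices $w\neq x$ are improvements, while the detour through the contraction $D'$ is harmless but unnecessary.
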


\begin{proof}
Let $v$ be the head of $e$. By Corollary~\ref{cor: superfillable} (applied to
$F\cup\{e\}$), the set $T_{v,F}$ is fillable in $F\cup\{e\}$. Fix a path system
$\mathcal{P}$ witnessing this, and let $\mathcal{Q}$ be a path system witnessing
$\delta_F(v)\in \mathcal{G}_F(v)$. Let $\mathcal{Q}'$ consist of the terminal segments of
the paths in $\mathcal{Q}$ starting from their last edges in $\delta_F(T)$.

For each $Q\in \mathcal{Q}'$, unite $Q$ with the unique path $P_Q\in \mathcal{P}$ whose
last edge is the first edge of $Q$, and also include the unique path $P_e\in\mathcal{P}$
whose last edge is $e$. The resulting path system witnesses that
$\delta_{F\cup\{e\}}(v)\in \mathcal{G}_{F\cup\{e\}}(v)$.
\end{proof}

\section{Proof of Theorem~\ref{thm: main}}\label{sec: main}

Assume that $D=(V,E)$ is a flame. For each $v\in V\setminus\{r\}$, let
$\mathcal{P}_v=\{P_e : e\in \delta_D(v)\}$
be a path system witnessing $\delta_D(v)\in \mathcal{G}_D(v)$, and let
$\mathcal{P}:=\bigcup_{v\in V\setminus\{r\}}\mathcal{P}_v$. Note that for each $e\in E$ there is a unique path $P_e\in\mathcal{P}$ whose last 
edge is $e$. A set $F\subseteq E$ is called \emph{$\mathcal{P}$-respecting} if $P_e\subseteq F$ for
every $e\in F$. Every $\mathcal{P}$-respecting set $F$ is a flame, since for each
$v\in V\setminus\{r\}$ the paths $\{P_e : e\in \delta_F(v)\}$  lie in $F$ and witness $\delta_F(v)\in \mathcal{G}_F(v)$.

\begin{observation}
The union of a $\subseteq$-chain of $\mathcal{P}$-respecting flames is again a
$\mathcal{P}$-respecting flame.
\end{observation}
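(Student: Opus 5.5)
The plan is to verify the defining property of $\mathcal{P}$-respecting sets directly for the union of the chain, and then use the remark made just before the statement, that every $\mathcal{P}$-respecting set is a flame.

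Let $\mathcal{C}$ be a $\subseteq$-chain of $\mathcal{P}$-respecting sets and let $F:=\bigcup\mathcal{C}$. If $\mathcal{C}$ is empty, we interpret $F$ as $\emptyset$, which is trivially $\mathcal{P}$-respecting, since no edge is subject to the condition. Otherwise, take any $e\in F$. Then there is some $F'\in\mathcal{C}$ with $e\in F'$. Because $F'$ is $\mathcal{P}$-respecting, we get $P_e\subseteq F'\subseteq F$. Hence $F$ is $\mathcal{P}$-respecting.

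Finally, we invoke the observation preceding the statement. For each $v\in V\setminus\{r\}$, the paths $\{P_e : e\in\delta_F(v)\}$ lie in $F$. They are edge-disjoint because they are members of the single system $\mathcal{P}_v$. They cover $\delta_F(v)$, since each $P_e$ ends with $e$. Therefore they witness $\delta_F(v)\in\mathcal{G}_F(v)$, and $F$ is a flame.

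There is no real obstacle here. The point to note is that the chain condition is not actually needed: the $\mathcal{P}$-respecting property is closed under arbitrary unions. This works because the property is a local closure condition on each edge, and each path $P_e$ is fixed in advance, independently of $F$.
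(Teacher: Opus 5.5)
Your proof is correct and is exactly the routine verification the paper leaves implicit: each edge $e$ of the union lies in some member of the chain, which already contains $P_e$, and the paper's preceding remark that every $\mathcal{P}$-respecting set is a flame then finishes the argument. Your side remark is also right: the chain hypothesis is superfluous, since $\mathcal{P}$-respecting sets are closed under arbitrary unions.
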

We call a set $F\subseteq E$ \emph{almost $\mathcal{P}$-respecting} if there are only
finitely many edges $e\in F$ such that $P_e\not\subseteq F$. An almost
$\mathcal{P}$-respecting set is not necessarily a flame.

\begin{lemma}\label{lem: add fin}
If $F$ is an almost $\mathcal{P}$-respecting flame and $e\in E\setminus F$, then there
exists a finite set $\{e_i : i\leq n\}\subseteq E\setminus F$ containing $e$ such that
$F\cup\{e_i : i\leq m\}$ is a flame for every $m\leq n$.
\end{lemma}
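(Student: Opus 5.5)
The plan has two stages. First I find a \emph{finite} set $S\subseteq E\setminus F$ containing $e$ such that $F\cup S$ is a flame. Then I order $S$ one edge at a time, using Lemma~\ref{lem: char no single} and Corollary~\ref{cor: insert one} so that every intermediate set is a flame.

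For the first stage, let $B\subseteq F$ be the finite set of edges $b\in F$ with $P_b\not\subseteq F$. I would try
\[
S:=\Bigl(P_e\cup\bigcup_{b\in B}P_b\Bigr)\setminus F ,
\]
which is finite, and set $G:=F\cup S$. Fix a vertex $w\neq r$. The system $\{P_g : g\in\delta_G(w)\}$ is edge-disjoint and covers $\delta_G(w)$. Only finitely many of its paths can leave $G$, because such a $g$ lies in the finite set $B\cup S$. So the subsystem lying in $G$ misses only finitely many edges of $\delta_G(w)$. Moreover $\delta_F(w)\in\mathcal G_F(w)\subseteq\mathcal G_G(w)$-style witnesses exist on the $F$-part. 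I would then run Lemma~\ref{lem: Augmenting path lemma} in $G$ finitely many times. Each time case~\ref{item: first possib} occurs, one more ingoing edge of $w$ is covered.

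If case~\ref{item: second possib} occurs with a set $X\ni w$, I would argue as in Corollary~\ref{cor: n more max}. The uncovered edge $g$ has its full path $P_g$ unless $g\in B\cup S$. For $g\in B\cup S$ the relevant portion of $P_g$ was added into $S$, so $P_g\subseteq G$ in that case too. Then $P_g$ crosses $\delta_G(X)$ at an edge already used, and rerouting along the $P$-paths gives the contradiction. If the first choice of $S$ does not close up, the fallback is to enlarge $S$ by finitely many further $P$-segments. The key invariant to keep is that only finitely many edges of $G$ have their $P$-path outside $G$.

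For the second stage, I induct on $|S|$. It suffices to show that whenever $F\subsetneq F\cup S$ are both flames with $S$ finite, some $s\in S$ has $F\cup\{s\}$ a flame. After adding that $s$ I recurse; the set $F\cup\{s\}$ is still almost $\mathcal P$-respecting, so the hypotheses persist. Since $F$ is a flame, Lemma~\ref{lem: char no single} says $F\cup\{s\}$ fails to be a flame exactly when the tail of $s$ lies in $T_{w_s,F}$, where $w_s$ is the head of $s$. Suppose this happens for every $s\in S$. Take $w=w_s$ and $T=T_{w,F}$. A witnessing system in $F\cup S$ for $\delta_{F\cup S}(w)$ has more paths than a witnessing system in $F$. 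By Corollary~\ref{cor: at most n} applied in $F$, the $F$-edges of $\delta(T)$ can serve only the old paths. So some path must enter $T$ through an edge $s'\in S\cap\delta_{F\cup S}(T)$ whose tail lies outside $T$. Then $s'$ enters $T_{w,F}$, and I expect to derive that $s'\in\delta(T_{w_{s'},F})$ by nesting of tight sets, which contradicts the assumption via Corollary~\ref{cor: insert one}.

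Making this final step rigorous is the main obstacle. Passing from ``$s'$ enters $T_{w,F}$'' to ``$s'$ enters $T_{w_{s'},F}$'' is not automatic. I would first try to choose $s\in S$ extremal, for instance the $S$-edge nearest to $r$ on some witnessing path in $F\cup S$. I would combine this with Lemma~\ref{lem: tight union} and Corollary~\ref{cor: superfillable} to force the required containment $T_{w_{s'},F}\subseteq T_{w,F}$, or at least to exclude the tail of $s'$ from it. Finiteness of $S$ guarantees that such an extremal choice exists and that the recursion terminates. Since $s=e$ need not come first, the set $\{e_i : i\le n\}$ is simply the ordering of $S$ produced this way; it contains $e$ by construction.
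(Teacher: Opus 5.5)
Your first stage has a genuine gap, and it is where all the difficulty of the lemma lies. The claim that for $g\in S$ ``the relevant portion of $P_g$ was added into $S$'' is false. You added $P_e$ and the paths $P_b$ with $b\in B$, but each $g\in S$ is itself a new edge whose own path $P_g$ need not lie in $G$. So $G=F\cup S$ is only almost $\mathcal{P}$-respecting and need not be a flame. For example, take vertices $r,a,x,v$, parallel edges $\ell,h$ from $r$ to $a$, parallel edges $f,g$ from $a$ to $x$, and an edge $e$ from $x$ to $v$. Let $P_\ell=\{\ell\}$, $P_h=\{h\}$, $P_f=\{\ell,f\}$, $P_g=\{h,g\}$, $P_e=\{\ell,g,e\}$ and $F=\{\ell,f\}$. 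Then $F$ is a $\mathcal{P}$-respecting flame, $B=\emptyset$ and $S=\{g,e\}$. But in $F\cup S$ the vertex $x$ has two ingoing edges with tail $a$, while $a$ has only one ingoing edge. So case~\ref{item: second possib} of Lemma~\ref{lem: Augmenting path lemma} occurs at $x$ with $X=\{a,x\}$, and there is no contradiction to derive. Your fallback of enlarging $S$ by further $P$-paths has no termination argument, and none is available in general. The closure of a single edge under $f\mapsto P_f$ can be infinite, which is why Lemma~\ref{lem: add countable} needs $\omega$ rounds. Showing that some finite enlargement is a flame is essentially the statement you are trying to prove.

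The missing idea is that you never need $F\cup S$ to be a flame; only the head $v$ of $e$ matters. Put $A:=\{e\}\cup\{f\in\delta_F(v): P_f\not\subseteq F\}$ and $A':=\bigcup_{f\in A}P_f$. The paths $P_f$ with $f\in\delta_F(v)\cup\{e\}$ are edge-disjoint $rv$-paths whose edges outside $F$ all lie in the finite set $A'\setminus F$. That is all your stage-2 counting actually uses. If $F\cup\{e\}$ is not a flame, the tail of $e$ lies in $T:=T_{v,F}$ by Lemma~\ref{lem: char no single}. The counting with Corollary~\ref{cor: at most n} then yields some $g\in(A'\setminus F)\cap\delta_D(T)$, and $F\cup\{g\}$ is a flame by Corollary~\ref{cor: insert one}. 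Since $g\neq e$ and $A'\cap\delta_D(v)=A$, the head of $g$ is not $v$. Hence passing to $F\cup\{g\}$ can only shrink $A$ and $A'$, and it strictly decreases $\lvert A'\setminus F\rvert$; induction on this quantity is the paper's proof.

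Relatedly, the ``main obstacle'' you flag in stage 2 is not real. Corollary~\ref{cor: insert one}, as stated and as used in the paper, applies to any edge entering $T_{w,F}$, whatever its head. Corollary~\ref{cor: superfillable} makes $T_{w,F}$ fillable in $F\cup\{s'\}$, and you combine this with a witness for the head of $s'$. So once $s'$ enters $T_{w,F}$ you are done, with no nesting of tight sets needed. With that reading your stage 2 is a sound augmentation argument between nested flames differing in finitely many edges, but it cannot start without stage 1.
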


\begin{proof}
Suppose, for a contradiction, that the statement is false, and let the quadruple
$(D,\mathcal{P},F,e)$ be a counterexample. Let $u$ and $v$ be the tail and head of $e$,
respectively. Define
\[
A:=\{e\}\cup\{f\in \delta_F(v) : P_f\not\subseteq F\}
\quad\text{and}\quad
A':=\bigcup_{f\in A} P_f .
\]
Since $F$ is almost $\mathcal{P}$-respecting,  $A$ is finite and thus so is  $A'$. We assume
that the counterexample is chosen so that
\[
n(D,\mathcal{P},F,e):=\lvert A'\setminus F\rvert
\]
is minimal.

To obtain a contradiction, it suffices to show that there exists an edge
$g\in A'\setminus F$ such that $F\cup\{g\}$ is a flame. Indeed, we must then have
$g\neq e$, since $(D,\mathcal{P},F,e)$ is a counterexample. Furthermore, replacing $F$ by
$F\cup\{g\}$ yields another counterexample with
\[
n(D,\mathcal{P},F\cup\{g\},e)=n(D,\mathcal{P},F,e)-1,
\]
contradicting the minimality of $n(D,\mathcal{P},F,e)$.

By Lemma~\ref{lem: char no single}, the set $T_{v,F}=:T$ contains $u$. By
Corollary~\ref{cor: insert one}, for any edge $g\in \delta_D(T)\setminus F$, the digraph
$F\cup\{g\}$ is a flame. Thus it remains to show that
\[
\delta_D(T)\cap(A'\setminus F)\neq\emptyset.
\]

Suppose, for a contradiction, that $\delta_D(T)\cap(A'\setminus F)=\emptyset$. Then for
every $f\in A$ we have $P_f\cap \delta_D(T)\subseteq F$. Note that the sets $P_f\cap \delta_D(T)=P_f\cap \delta_F(T)$ for $f\in A$
are nonempty and pairwise disjoint. Therefore,
\[
\left|\bigcup_{f\in A} P_f\cap \delta_F(T)\right|\geq |A|,
\]
and hence $|\delta_F(T)\cap A'|\geq |A|$.

Let
\[
\mathcal{Q}:=\{P_f : f\in \delta_F(v)\setminus(A\setminus\{e\})\}.
\]
On the one hand, $\mathcal{Q}$ covers all but $|A|-1$ ingoing edges of $v$ in $F$. On the
other hand, there are at least $|A|$ edges in
$\delta_F(T)$ that are not used by $\mathcal{Q}$ because $\bigcup_{f\in A} P_f\cap \delta_F(T)$ consists of such edges. This contradicts
Corollary~\ref{cor: at most n} applied to $F$  and the terminal segments of the paths in $\mathcal{Q}$ from their last edges in $\delta_{F}(T)$.
\end{proof}

Let $\prec$ be a well-order of $E$ of order type $|E|$. The special case of
Theorem~\ref{thm: main} where $E$ is countable follows from
Lemma~\ref{lem: add fin} by a straightforward recursion.

Indeed, suppose that a finite flame
\[
F_k=\{f_0,\dots,f_m\}\subseteq E
\]
has already been defined. Since finite flames are clearly $\mathcal{P}$-respecting, we
may apply Lemma~\ref{lem: add fin} to $F_k$ and the $\prec$-least element $e$ of
$E\setminus F_k$. Let $\{e_i : i\leq n\}$ be the resulting set. We define
$f_{m+i}:=e_i$ and
\[
F_{k+1}:=\{f_0,\dots,f_m,\dots,f_{m+n}\}.
\]
The recursion is complete, and the enumeration
\[
F=\{f_n : n\in\mathbb{N}\}
\]
yields the desired well-order.

Now suppose that $E$ is uncountable and let $\kappa:=|E|$. We require the following
lemma.

\begin{lemma}\label{lem: add countable}
If $F\subseteq E$ is $\mathcal{P}$-respecting and $e\in E\setminus F$, then there exists
a set $\{e_n : n\in\mathbb{N}\}\subseteq E\setminus F$ containing $e$ such that
$F\cup\{e_n : n\leq m\}$ is a flame for every $m\in\mathbb{N}$ and
$F\cup\{e_n : n\in\mathbb{N}\}$ is $\mathcal{P}$-respecting.
\end{lemma}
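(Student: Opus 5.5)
We build the countable set by an $\omega$-length recursion. Lemma~\ref{lem: add fin} adds finitely many edges at each stage, and between stages we keep a bookkeeping queue of edges that are still owed so that the final union is $\mathcal{P}$-respecting.

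Set $F_0:=F$ and initialise the queue as $\langle e\rangle$. At each stage $k$ we have a set $F_k=F\cup\{e_n:n<N_k\}$ where $N_k\in\mathbb{N}$, every set $F\cup\{e_n:n\le m\}$ with $m<N_k$ is a flame, and $F_k$ is almost $\mathcal{P}$-respecting. The last property holds because $F$ is $\mathcal{P}$-respecting and only finitely many edges have been added, so only those new edges $f$ can have $P_f\not\subseteq F_k$. We also maintain a countable queue of edges of $E\setminus F$ to be added. When an edge $f$ is added, we append to the queue all edges of $P_f$, which is finite, that are not yet in the current set. The first queued edge is $e$ itself.

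At stage $k$, let $g$ be the first queued edge not yet in $F_k$. If every queued edge is already in $F_k$, then $F_k$ is $\mathcal{P}$-respecting. In that case we may either stop, padding the enumeration trivially if a finite set is allowed, or simply note the set is finite. The statement writes $\{e_n:n\in\mathbb{N}\}$, and a finite set listed with repetitions is harmless, so the procedure still yields what is needed. Otherwise, apply Lemma~\ref{lem: add fin} to the almost $\mathcal{P}$-respecting flame $F_k$ and the edge $g$. This yields finitely many edges $g_0,\dots,g_n$ containing $g$, all outside $F_k\supseteq F$, whose successive additions are all flames. Append them as $e_{N_k},\dots,e_{N_k+n}$, put $F_{k+1}:=F_k\cup\{g_0,\dots,g_n\}$, and enqueue the missing edges of each $P_{g_i}$. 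All invariants are preserved, since $F_{k+1}$ is again $F$ plus finitely many edges.

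It remains to check $\mathcal{P}$-respecting for the union $F^*:=\bigcup_k F_k$. Each edge added gets the finitely many edges of its path queued. Because at each stage we handle the least unhandled queued edge, every queued edge is eventually inserted: a queue position is reached after finitely many stages, since each stage disposes of at least the current least pending entry. So for any $f\in F^*\setminus F$, every edge of $P_f$ lies in $F^*$. For $f\in F$ this holds because $F$ is $\mathcal{P}$-respecting. Every initial segment $F\cup\{e_n:n\le m\}$ is a flame by construction, and $e=e_0$ lies in the set.

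The only substantive step is the invariant that $F_k$ remains almost $\mathcal{P}$-respecting, which is what allows Lemma~\ref{lem: add fin} to be applied at every stage. This is immediate because $F$ is fully $\mathcal{P}$-respecting and only finitely many edges are added before any given stage. So the main care needed is the fair bookkeeping of the queue, which guarantees that $F^*$ is $\mathcal{P}$-respecting.
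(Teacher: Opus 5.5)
Your proof is correct and follows essentially the paper's argument: you iterate Lemma~\ref{lem: add fin} on the flames $F_k$, which are $F$ plus finitely many edges and hence almost $\mathcal{P}$-respecting, and use fair bookkeeping so that the union is $\mathcal{P}$-respecting. Your FIFO queue of owed edges, with its strictly increasing handled positions, does the job of the paper's choice of an unfinished $f_n$ with minimal stage index $m_n$ and its unboundedness argument, and the only slip is that Lemma~\ref{lem: add fin} guarantees only that $e$ is among the added edges, not that $e=e_0$ (it is typically added last), which does not affect the conclusion.
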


\begin{proof}
The lemma follows by iteratively applying Lemma~\ref{lem: add fin} together with
standard ``bookkeeping''. We recursively construct an increasing chain
$(F_n)_{n\in\mathbb{N}}$ of almost $\mathcal{P}$-respecting flames such that:
\begin{enumerate}[label=(\arabic*)]
\item $F_0=F$;
\item $e\in F_1$;
\item $F_{n+1}\setminus F_n=\{e^n_1,\dots,e^n_{k_n}\}$ is finite, and
      $F_n\cup\{e^n_1,\dots,e^n_\ell\}$ is a flame for each $\ell\leq k_n$;
\item $F^*:=\bigcup_{n\in\mathbb{N}} F_n$ is $\mathcal{P}$-respecting.
\end{enumerate}

Let $\{e^1_1,\dots,e^1_{k_1}\}$ be obtained by applying Lemma~\ref{lem: add fin} to $F$
and $e$, and set $F_1:=F_0\cup\{e^1_1,\dots,e^1_{k_1}\}$. Suppose $F_n$ is defined for
some $n\geq 1$. If $F_n$ is $\mathcal{P}$-respecting, we set $F_m:=F_n$ for all $m>n$
and stop. Otherwise, there exists an edge $f_n\in F_n$ with $P_{f_n}\not\subseteq F_n$.
Choose $f_n$ such that the unique index $m_n$ with
$f_n\in F_{m_n+1}\setminus F_{m_n}$ is minimal.

Apply Lemma~\ref{lem: add fin} to $F_n$ and an edge $g_n\in P_{f_n}\setminus F_n$, and
let $\{e^n_1,\dots,e^n_{k_n}\}$ be the resulting set. Define
$F_{n+1}:=F_n\cup\{e^n_1,\dots,e^n_{k_n}\}$. This completes the recursion.

The enumeration of $F^*\setminus F_0$ is obtained by concatenating the enumerations
$\{e^n_1,\dots,e^n_{k_n}\}$. For any $f\in F^*$, there are at most $|P_f|$ indices $n$
with $f_n=f$, and for any $m\in\mathbb{N}$ there are at most
$\bigl|\bigcup_{f\in F_{m+1}\setminus F_m} P_f\bigr|$ indices $n$ with $m_n=m$. Hence the increasing
sequence $(m_n)_{n\in\mathbb{N}}$ is unbounded.

Let $f\in F^*$ and let $m$ be such that $f\in F_{m+1}\setminus F_m$. Then there exists
$n\in\mathbb{N}$ with $m_n>m$, which implies $P_f\subseteq F_n\subseteq F^*$. Therefore
$F^*$ is $\mathcal{P}$-respecting, completing the proof.
\end{proof}

We now proceed with the proof of Theorem~\ref{thm: main} as in the countable case, using
Lemma~\ref{lem: add countable} instead of Lemma~\ref{lem: add fin}. We apply transfinite
recursion.

At a successor step, suppose that a flame $F_\alpha\subseteq E$ with
$|F_\alpha|<|E|$ and a well-order $<_\alpha$ of $F_\alpha$ have already been defined.
Apply Lemma~\ref{lem: add countable} to $F_\alpha$ and the $\prec$-least element $e$
of $E\setminus F_\alpha$. Define
\[
F_{\alpha+1}:=F_\alpha\cup\{e_n : n\in\mathbb{N}\},
\]
and let $<_{\alpha+1}$ be the end-extension of $<_\alpha$ in which the edges of
$F_{\alpha+1}\setminus F_\alpha$ are ordered according to their enumeration.

If $\alpha$ is a limit ordinal, define
\[
F_\alpha:=\bigcup_{\beta<\alpha} F_\beta
\]
and let $<_\alpha$ be the union of the compatible well-orders $<_\beta$ for
$\beta<\alpha$. This completes the transfinite recursion, and $<_\kappa$ is the desired
well-order of $E$.

\printbibliography
\end{document}